\documentclass[aap]{imsart}

\RequirePackage{amsthm,amsmath,amsfonts,amssymb}
\RequirePackage[numbers,sort&compress]{natbib}
\RequirePackage[colorlinks,citecolor=blue,urlcolor=blue]{hyperref}
\RequirePackage{graphicx}

\startlocaldefs
\theoremstyle{plain}

\newtheorem{proposition}{Proposition}[section]
\newtheorem{corollary}{Corollary}[section]
\newtheorem{lemma}{Lemma}[section]
\theoremstyle{definition}

\newtheorem*{example}{Example}

\endlocaldefs

\usepackage{enumitem}
\usepackage{dsfont}
\usepackage{mathtools}
\usepackage{CoOL-bib/customCommands}

\begin{document}

\begin{frontmatter}
\title{Bounds on Spectral Gaps for Non-Reversible Markov Chains with Applications to Temporal Difference Learning}
\runtitle{Spectral Gaps for Non-Reversible Markov Chains}

\begin{aug}
\author[A]{\fnms{Andrew}~\snm{Lamperski} 
\ead[label=e1]{alampers@umn.edu}}
\address[A]{Electrical and Computer Engineering,
University of Minnesota \printead[presep={ ,\ }]{e1}}

\end{aug}

\begin{abstract}
This work is motivated by the analysis of temporal difference algorithms, where stability can be guaranteed by bounding the eigenvalues of an associated matrix derived from a, typically non-reversible, Markov kernel. We generalize the existing sufficient conditions for stability and show that the associated eigenvalues can be bounded in terms of the Dirichlet spectral gap of the Markov kernel. We derive a collection of methods for showing that non-reversible Markov chains have positive spectral gaps. We show that if a Markov chain has positive absolute spectral gap, then it has a positive Dirichlet spectral gap.  In the case of discrete-time linear Gaussian systems, we give explicit bounds for both Dirichlet and absolute spectral gaps. Additionally, we present an example of a Markov chain which is $V$-uniformly geometrically ergodic but has zero Dirichlet spectral gap. 
\end{abstract}

\begin{keyword}[class=MSC]
\kwd[Primary ]{60J05}
\kwd{37A30}
\kwd[; secondary ]{60J20}
\kwd{	93E15 }
\end{keyword}

\begin{keyword}
\kwd{Non-Reversible Markov Chain}
\kwd{Spectral Gap}
\kwd{Temporal Difference Learning}
\end{keyword}

\end{frontmatter}

\section{Introduction}
Spectral properties of operators induced by Markov kernels are commonly studied as a means to analyze convergence to stationary distributions \cite{chen2006eigenvalues,douc2018markov,bakry2013analysis}. Spectral gaps, in particular, can be used to quantify convergence speed. 

When the Markov chain is reversible, spectral gaps can be expressed in terms of an associated Dirichlet form. However, when the Markov chain is non-reversible, the Dirichlet form and associated spectral gap can be defined \cite{chung2005laplacians}, but their connection to the spectrum of the original operator and convergence properties is less obvious. Related notions, such as absolute spectral gaps, can give more precise information about convergence for non-reversible Markov chains \cite{kontoyiannis2012geometric}.   

While other spectral gap notions may be more useful for analyzing convergence for non-reversible Markov chains, the spectral gap definition via Dirichlet forms naturally arises in the analysis of reinforcement learning methods \cite{liu2021temporal,tian2026bridging}. We show concretely how the eigenvalues of a matrix arising in the analysis of temporal difference learning methods, \cite{konda2003onactor,tsitsiklis1999average}, can be bounded in terms of the Dirichlet spectral gap for (typically non-reversible) Markov chain.  Bounds on the eigenvalues of this matrix, in turn, utilized for quantitative convergence guarantees for actor-critic reinforcement learning.  Often, however, the bounds are assumed rather than proved \cite{wu2020finite,xu2020improving,ganesh2025sharper}. The connection to reinforcement learning motivates the main results on this paper, which gives a collection of methods for proving that the Dirichlet 
spectral gap is positive and giving positive lower bounds.  

The main contributions of the work are as follows. We generalize existing conditions for showing that eigenvalues for matrices arising in temporal difference learning have negative real parts. Existing conditions require that the underlying Markov chain is $V$-uniformly geometrically ergodic. Our results show that irreducibility suffices. We show, further, that the real-parts of the eigenvalues of the corresponding matrices can be bounded in terms of the (Dirichlet) spectral gap of the corresponding Markov operator. We give general conditions for positive spectral gaps based on density functions of the Markov kernel. Then we show how to derive quantitative bounds on spectral gaps via comparison methods. In particular, we show how uniform geometric ergodicity and absolute spectral gaps can lead to bounds on Dirichlet spectral gaps. We analyze the particular case of linear Gaussian systems, as it commonly arises in control problems, and give explicit bounds on Dirichlet and absolute spectral gaps in terms of the original system matrices. Finally, we construct an example of a $V$-uniformly geometrically ergodic Markov chain which has zero Dirichlet and zero absolute spectral gap, thus showing that $V$-uniform geometric ergodicity is not sufficient to imply a positive Dirichlet spectral gap.

The paper is organized as follows. Section~\ref{sec:background} gives background on Markov operators and defines the spectral gap properties studied in the paper. Section~\ref{sec:TD} describes the connection with spectral gaps and temporal difference learning. 
Sufficient conditions for positive spectral gaps are given in Section~\ref{sec:gaps}. The example of a $V$-uniformly geometrically ergodic Markov chain with zero Dirichlet spectral gap is given in Section~\ref{sec:noGap}. Conclusions are given in Section~\ref{sec:conclusion}.

\paragraph*{Notation}
$\bbR$ and $\bbC$ denote the set of real and complex numbers, respectively. If $M$  is a matrix, its transpose is denoted by $M^\top$. Random variables are denoted as bold symbols, e.g. $\bx$. The expected value of a random variable is denoted by $\bbE[\bx]$.

\section{Markov Operators and Spectral Gaps}
\label{sec:background}
In this section, $P$ denotes a Markov kernel with invariant probability measure $\mu$ over measure space $(\cX,\cB)$, where $\cX$ is the state space and $\cB$ is a corresponding $\sigma$-algebra over $\cX$.

We review some required concepts from the operator-theoretic interpretation of Markov chains. For more details, see \cite{douc2018markov}.

Let $L^2(\mu)$ denote the set of square integrable functions $f:\cX\to\bbC$ with inner product defined by
\begin{equation}
  \label{eq:innerProduct}
\langle f,g\rangle = \int_{\cX}\bar f(x) g(x) \mu(dx). 
\end{equation}

The Markov kernel, $P$, can be viewed as an operator over $L^2(\mu)$ with action defined by
$$
Pf(x)=\int_{\cX}P(x,dy)f(y).
$$

The measure, $\mu$, can be viewed as a linear functional over $L^2(\mu)$ with action defined by
$$
\mu f=\int_{\cX}\mu(dx)f(x). 
$$

The adjoint of $P$ with respect to the inner product from (\ref{eq:innerProduct}) is given by
$$
P^\star f(x) = \frac{d\mu_f}{d\mu}(x),
$$
where $\mu_f$ is the complex measure defined by
$$
\mu_f(A)=\int_{\cX}\mu(dx)f(x)P(x,A).
$$
for all $A\in\cB$.

The operator, $P^\star$, can also be interpreted as a Markov kernel defined by
$$
P^\star(x,A)=P^{\star}\indic_A(x),
$$
where $\indic_A$ is the indicator function of the set $A$:
$$
\indic_{A}(x)=\begin{cases}1 & x\in A \\
  0 & x \notin A.
  \end{cases}
  $$
Furthermore, $\mu$ is an invariant measure for $P^\star$.

We say that $P$ is \emph{reversible} with respect to $\mu$  if $P=P^\star$. Using the terminology of \cite{fill1991eigenvalue}, we call $P_a:=\frac{1}{2}(P+P^\star)$  the \emph{additive reversibilization} of $P$.

Using the terminology of \cite{saloff1997lectures}, the \emph{Dirchlet form} associated with $P$ and $\mu$ is defined by:
\begin{equation}
  \label{eq:dirichlet}
  \cE(f,g)=\Re\left(\langle f,(I-P)g\rangle\right),
\end{equation}
where $\Re$ denotes the real-part. 
(We use a slightly different definition from \cite{saloff1997lectures} for consistency with our definition of the inner product.)

In particular, direct calculations show that:
\begin{equation}
  \label{eq:dirichletAlt}
  \cE(f,f)=\frac{1}{2}\int_{\cX}\int_{\cX}\mu(dx)P(x,dy)|f(x)-f(y)|^2=\left\langle f, \left(I-\frac{1}{2}(P+P^\star)\right)f\right\rangle
\end{equation}

Let $\var(f)=\int_{\cX}\mu(dx)|f(x)|^2-\left|\int_{\cX}\mu(dx)f(x) \right|^2$ denote the variance of $f$. In the terminology of \cite{saloff1997lectures,chen2006eigenvalues}, we define the \emph{spectral gap} of $P$ by
\begin{equation}
  \label{eq:gap}
\mathrm{gap}(P) = \inf\left\{
  \frac{\cE(f,f)}{\var(f)}\middle|0<\var(f)<\infty
  \right\}.
\end{equation}

This notion of spectral gap goes by various names such as the \emph{optimal Poincar\'e constant} \cite{dyer2006markov} or the \emph{first Dirichlet eigenvalue} \cite{chen2006eigenvalues}. 

We will discuss in Section~\ref{sec:TD} how the definition of spectral gap from \eqref{eq:gap} is well-suited for the analysis of matrices arising in temporal difference methods. 

A common alternative spectral gap notion is the \emph{absolute spectral gap}, which is examined in \cite{douc2018markov,kontoyiannis2012geometric}. To define the absolute spectral gap, let $L_0^2(\mu)$ denote the subspace of $L^2(\mu)$ of functions, $f$, with $\mu f=0$, and let $\mathrm{spec}(P|L_0^2(\mu))$ be the spectrum of $P$ restricted to the subspace $L_0^2(\mu)$. Then the absolute spectral gap of  $P$ is defined by:
$$
\absgap(P)=1-\sup\{|\lambda| | \lambda \in\mathrm{spec}(P|L_0^2(\mu))\}.
$$
Lemma~\ref{lem:absGap2Gap} in Subsection~\ref{ss:comparison} shows that $\absgap(P)>0$ implies that $\gap(P)>0$. Example~\ref{ex:noGap2AbsGap} below in this section shows that the converse does not hold.

Other spectral gap notions include the pseudo-spectral gap \cite{paulin2015concentration,wolfer2019estimating} and the iterated Poincar\'e gap \cite{chatterjee2025spectral,huang2025non}.

We say that $P$ admits a kernel density with respect to $\mu$ if there is a function, $K:\cX\times \cX\to \bbR$, such that for all $A\in\cB$:
$$
P(x,A)=\int_{A}\mu(dy)K(x,y).
$$

When $P$ admits a kernel density with respect to $\mu$, the actions of $P$ and $P^\star$ can be expressed as:
\begin{align*}  
  Pf(x)&=\int_{\cX}\mu(dy)K(x,y)f(y) \\
  P^\star f(x)&=\int_{\cX}\mu(dy)K(y,x)f(y).
\end{align*}
In this case, $P$ is reversible if $K(x,y)=K(y,x)$ for all $x,y\in\cX$. 

When $\cX=\{1,\ldots,n\}$ and $\cB = 2^{\cX}$, we identify $P$ with the corresponding $n\times n$ stochastic matrix and identify $\mu$ with the corresponding $1\times n$ vector. If $\mu_i >0$ for $i=1,\ldots,n$, then $P$ admits a kernel density with respect to $\mu$, which is given by $K_{ij}=P_{ij}/\mu_j$, and $P^\star$ corresponds to the matrix with $P^\star_{ij}=K_{ji}\mu_j=\frac{1}{\mu_i}P_{ji}\mu_j$. Setting $D=\diag(\mu)$, the expressions for $K$ and $P^\star$ can be expressed in matrix form as:
\begin{align*}
  K&= PD^{-1}\\
  P^\star &= D^{-1} P^\top D
\end{align*}
In this case, $P$ is reversible if and only if $P=D^{-1}P^\top D$. 

\begin{example}
  \label{ex:noGap2AbsGap}
  This example gives a finite-state Markov chain with $\absgap(P)=0$ and $\gap(P)=2>0$. Thus, in contrast to Lemma~\ref{lem:absGap2Gap}, $\gap(P)>0$ does not imply that $\absgap(P)>0$. Let 
  $$
  P=\begin{bmatrix}
  0 & 1 \\
  1 & 0
  \end{bmatrix}.
  $$
  This defines an irreducible, reversible (but periodic) Markov chain, and the unique invariant distribution is 
  $$
  \mu=\begin{bmatrix}\frac{1}{2} & \frac{1}{2}\end{bmatrix}.
  $$

  Direct calculations give:
  \begin{gather*}
  \mathrm{spec}(P|L_0^2(\mu)) = \{-1\} \implies \absgap(P)=0 \\
  \gap(P)=2.
  \end{gather*}
  In this example, the difference between spectral and absolute spectral gaps reflects a basic difference in their definitions. Namely, when restricted to reversible operators, the (Dirichlet) spectral gap measures the distance of the spectrum from $1$ (aside from the eigenvalue at exactly $1$), where the absolute gap measures the deviation of the spectrum from all of the unit circle. 

  For Markov chain matrices which are both reversible and aperiodic, only a single eigenvalue (at $1$) could occur on the unit circle and both the (Dirichlet) spectral gap and the absolute spectral gap would be positive.  
\end{example}

An elementary lemma needed in the proofs is:

\begin{lemma}
  \label{lem:PrIrreducible}
  Let $P$ be a Markov kernel with invariant probability measure $\mu$. 
    If $P$ is irreducible, then $P_a:=\frac{1}{2}(P+P^\star)$ must be irreducible as well.
\end{lemma}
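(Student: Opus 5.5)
Irreducibility here should be read in the general-state-space sense: there is a $\sigma$-finite measure $\varphi$ such that for every $A\in\cB$ with $\varphi(A)>0$ and every $x\in\cX$, some $n\ge 1$ has $P^n(x,A)>0$. Equivalently, $\sum_{n\ge1}2^{-n}P^n(x,A)>0$. The plan is to show that $P_a^n(x,A)$ dominates a positive multiple of $P^n(x,A)$. Then every set that $P$ reaches from $x$ is also reached by $P_a$, with the same irreducibility measure $\varphi$.

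\textbf{Key step.} Both $P$ and $P^\star$ are Markov kernels, the latter by the discussion preceding the lemma. Hence $P_a=\frac12(P+P^\star)$ is a Markov kernel with nonnegative entries. Expand
$$
P_a^n=2^{-n}\sum_{(Q_1,\ldots,Q_n)\in\{P,P^\star\}^n}Q_1Q_2\cdots Q_n .
$$
Every term in this sum is a composition of nonnegative kernels, so each term is nonnegative on indicator functions. Dropping all terms except $P^n$ gives the pointwise bound
$$
P_a^n(x,A)\ge 2^{-n}P^n(x,A)\qquad\text{for all }x\in\cX,\ A\in\cB,\ n\ge1.
$$
Therefore, if $\varphi(A)>0$ and $P^n(x,A)>0$, then $P_a^n(x,A)>0$. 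This proves that $P_a$ is $\varphi$-irreducible.

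\textbf{Main obstacle.} The only delicate point is that $P^\star$ is defined as a Radon--Nikodym derivative, and so is determined only $\mu$-a.e. We need a version of $P^\star(x,\cdot)$ that is a genuine Markov kernel, nonnegative for every $x$, so that the inequality holds at every $x$ rather than only almost everywhere. I would take the regular version implicitly assumed in the paper's statement that $P^\star$ "can be interpreted as a Markov kernel." Nonnegativity of each composition $Q_1\cdots Q_n\indic_A$ then follows by induction on $n$, since each $Q_i$ maps nonnegative functions to nonnegative functions.

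If the paper's notion of irreducibility is instead the finite-state one (the matrix graph is strongly connected), the same argument works directly. $(P_a)_{ij}\ge\frac12P_{ij}$, so every edge of $P$ is an edge of $P_a$, and strong connectivity is inherited. If irreducibility is meant in the $L^2$ sense, that $Pf=f$ implies $f$ is constant, I would instead use \eqref{eq:dirichletAlt}. Suppose $P_af=f$ with $f$ real. Then $\cE(f,f)=\langle f,(I-P_a)f\rangle=0$, so $f(x)=f(y)$ for $\mu(dx)P(x,dy)$-a.e. $(x,y)$. It follows that $Pf=f$ $\mu$-a.e., and hence $f$ is constant. I would present the kernel-domination argument as the main proof and mention this variant to cover the $L^2$ reading.
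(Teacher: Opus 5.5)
Your proposal is correct and rests on the same key inequality as the paper, $P_a^n(x,A)\ge 2^{-n}P^n(x,A)$, which you obtain by expanding $P_a^n$ and discarding the nonnegative cross terms. The only difference is the definition of irreducibility: the paper requires an accessible small set $C$, so it applies this bound twice, once for accessibility of $C$ and once to turn the minorization $P^m(x,\cdot)\ge\nu$ on $C$ into $P_a^m(x,\cdot)\ge 2^{-m}\nu$, whereas your $\varphi$-irreducibility definition needs only the accessibility step.
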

\begin{proof}
Recall that $P$ is irreducible if the state space has an accessible small set, which we denote by $C\in\cB$. By accessible, we mean that for all $x\in\cX$, there is a number $n\ge 1$ such that $P^n(x,C) >0$. But then $C$ must also be accessible for $P_a$, since $P_a^n(x,C)\ge \frac{1}{2^n}P^n(x,C)$. By small, we mean that there is a non-zero non-negative measure, $\nu$, and a positive integer $m$ such that for all $x\in C$ and all $B\in\cB$ we have $P^m(x,B)\ge \nu(B)$. (Theorem 9.2.15 of \cite{douc2018markov} shows that we can take $\nu(B)=\delta \mu(B)$ for some $\delta >0$.) Then, as before, we have $P_a^m(x,B)\ge \frac{1}{2^m} P^m(x,B)\ge \frac{1}{2^m}\nu(B)$. So, $C$ is small for $P_a$ as well.
\end{proof}

The following elementary lemma relates the definition of the spectral gap from \eqref{eq:gap} to the spectrum of the additive reversibilization.
\begin{lemma}
  \label{lem:gapFromSpectrum}
  If $P$ is irreducible, then
  $\gap(P)=1-\sup\left(\spec(P_a)\setminus\{1\}\right)$
\end{lemma}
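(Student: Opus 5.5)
The plan is to use the second expression in \eqref{eq:dirichletAlt}, namely $\cE(f,f)=\langle f,(I-P_a)f\rangle$. This turns $\gap(P)$ into a Rayleigh quotient problem for the bounded self-adjoint operator $P_a$ on $L^2(\mu)$. Since $P$ and $P^\star$ are Markov operators with invariant measure $\mu$, both are contractions on $L^2(\mu)$. Hence $P_a$ is self-adjoint with $\|P_a\|\le 1$, its spectrum lies in $[-1,1]$, and $1\in\spec(P_a)$ with the constant function $\indic_{\cX}$ as eigenfunction.

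The first step is to reduce to $L_0^2(\mu)$. Write $f=c+f_0$ with $c=\mu f$ and $f_0\in L_0^2(\mu)$. Then $\var(f)=\|f_0\|^2$, and $\cE(f,f)=\cE(f_0,f_0)$: constants are fixed by $P$ and $P^\star$, and $\langle c,(I-P_a)f_0\rangle=\bar c\,\mu((I-P_a)f_0)=0$ because $\mu$ is invariant for $P_a$. Since $L_0^2(\mu)$ is invariant under the self-adjoint $P_a$, the standard variational characterization gives
\[
\gap(P)=\inf_{0\ne f_0\in L_0^2(\mu)}\frac{\langle f_0,(I-P_a)f_0\rangle}{\|f_0\|^2}=1-\sup\spec\left(P_a|L_0^2(\mu)\right).
\]
Here I use the fact that for a bounded self-adjoint operator the supremum of its numerical range equals the supremum of its spectrum.

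The second step relates $\spec(P_a|L_0^2(\mu))$ to $\spec(P_a)\setminus\{1\}$. By the orthogonal decomposition $L^2(\mu)=\mathrm{span}\{\indic_{\cX}\}\oplus L_0^2(\mu)$, we have $\spec(P_a)=\{1\}\cup\spec(P_a|L_0^2(\mu))$. So the two suprema agree unless $1$ belongs to $\spec(P_a|L_0^2(\mu))$ without being isolated from the rest. This is where irreducibility enters, via Lemma~\ref{lem:PrIrreducible}: $P_a$ is an irreducible kernel with invariant probability $\mu$. Then any $h\in L^2(\mu)$ with $P_ah=h$ is $\mu$-a.e. constant, so $1$ is not an eigenvalue of $P_a$ on $L_0^2(\mu)$. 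An alternative route is the Dirichlet form: $P_a h=h$ gives $\iint\mu(dx)P_a(x,dy)|h(x)-h(y)|^2=0$, so $h$ is harmonic, and irreducibility makes bounded harmonic functions constant.

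The main obstacle is interpreting $\sup(\spec(P_a)\setminus\{1\})$ when $1$ is an accumulation point of the spectrum on $L_0^2(\mu)$ but not an eigenvalue there. In that case both sides should equal $0$ and $1-1$ respectively. Indeed, the supremum of $\spec(P_a)\setminus\{1\}$ is then $1$, since points of the spectrum accumulate at $1$ from below. On the other side, $\sup\spec(P_a|L_0^2(\mu))=1$ as well. So the formula holds provided $1$ is not an isolated point of $\spec(P_a|L_0^2(\mu))$. An isolated point of the spectrum of a self-adjoint operator is an eigenvalue, which the previous step excludes. Combining the two steps, $\sup\spec(P_a|L_0^2(\mu))=\sup(\spec(P_a)\setminus\{1\})$ in all cases, which gives the claimed identity. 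I expect the delicate step to be the harmonic-functions-are-constant argument for general state spaces. I would first try citing the standard result for irreducible chains with an invariant probability, e.g. from \cite{douc2018markov}, applied to $P_a$.
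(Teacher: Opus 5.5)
Your proposal is correct and follows essentially the same route as the paper: rewrite $\gap(P)$ as a Rayleigh quotient for the self-adjoint $P_a$ on $L_0^2(\mu)$, identify its supremum with $\sup\spec(P_a|L_0^2(\mu))$, and use irreducibility of $P_a$ (Lemma~\ref{lem:PrIrreducible}) to exclude $1$ as an eigenvalue on $L_0^2(\mu)$. Your last step is more careful than the paper's: the paper asserts the set equality $\spec(P_a)\setminus\{1\}=\spec(P_a|L_0^2(\mu))$, which is false when $1$ is an accumulation point of $\spec(P_a|L_0^2(\mu))$ (as for the chain in Section~\ref{sec:noGap}), whereas you correctly compare only the suprema and use that isolated spectral points of self-adjoint operators are eigenvalues---though your sentence ``the two suprema agree unless $1$ belongs \dots{} without being isolated'' has the condition reversed, since the only problematic case is $1$ being an \emph{isolated} point of $\spec(P_a|L_0^2(\mu))$.
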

\begin{proof}
  The definition from \eqref{eq:gap} can be expressed equivalently as 
  $$
  \gap(P)=\inf\{\cE(f,f)| f\in L_0^2(\mu),\: \|f\|=1\}=1-\sup\{\langle f,P_a f\rangle|f\in L_0^2(\mu),\: \|f\|=1\}.
  $$
  A standard result from operator theory, see e.g. Theorem 22.A.19 of \cite{douc2018markov}, implies that 
  $$
  \sup\{\langle f,P_a f\rangle|f\in L_0^2(\mu),\: \|f\|=1\}=\sup\:\spec(P_a|L_0^2(\mu)).
  $$ 
  Since $P_a$ is irreducible, Proposition 2.1.2 of \cite{douc2018markov} implies that $P_a$ has an eigenvalue of multiplicity $1$ at $\lambda=1$, and the corresponding eigenvectors are constants. The spectral theorem for bounded self-adjoint operators implies that $$
  \spec(P_a)\setminus\{\ones\} = \spec(P_a|(\mathrm{span}\{\ones\})^{\perp})=\spec(P_a|L_0^2(\mu)),
  $$
  where the second equality follows because $L_0^2(\mu)$ is the orthogonal complement of the space of constant functions. The result now follows.
\end{proof}

\section{Motivation from Temporal Difference Policy Evaluation}
\label{sec:TD}

Temporal difference (TD) policy evaluation is one of the main components of actor-critic algorithms, a common family of algorithms from reinforcement learning \cite{sutton2018reinforcement}. The main results of this section generalize existing sufficient conditions for the asymptotic approximation of the policy evaluation method $TD(\lambda)$ to be stable. They also show how bounds on the corresponding matrix can be given in terms of spectral gaps.  

Let $c:\cX\to\bbR$ be a function of the state space and let $\bx_k$ be a sequence of states generated by the Markov kernel, $P$. Let $\mu_k$ be the distribution of $\bx_k$. In the typical setting of average-cost reinforcement learning, it is assumed that there is a unique invariant distribution, $\mu$, such that $\mu_k$ converges to $\mu$. 

Let $\bar c= \mu c$. The \emph{marginal value function} is given by: 
$$
V(x)=\bbE\left[ \sum_{k=0}^{\infty}\left(c(\bx_k)-\bar c\right)\middle|\bx_0=x\right]
$$
and satisfies:
$$
V=c-\ones \bar c + PV
$$
where $\ones$ is the constant function taking the value of $1$.

The \emph{policy evaluation} problem involves estimating $V$, and temporal difference methods estimate $V$ directly from observations of the Markov chain. When the state space, $\cX$, is large or infinite, it is often impossible to compute $V$ exactly. In the most basic setting, we fix a collection of ``features", $\phi_i:\cX\to\bbR$ and search for parameters $\theta^i$ such that:
$$
V(x)\approx \sum_{i=1}^m\phi_i(x)\theta^i =:\phi(x)^\top \theta.
$$

A common temporal difference algorithm, known as $\mathrm{TD}(\lambda)$ is given by:
\begin{align*}
  \bJ_{k+1} &= \bJ_k +\alpha_k(c(\bx_k)-\bJ_k) \\
  \btheta_{k+1} &=\btheta_{k}+\alpha_k\left(c(\bx_k)-\bJ_k+\phi(\bx_{k+1})^\top \btheta_k-\phi(\bx_k)^\top\btheta_k \right)\bz_k \\
  \bz_{k+1}&=\lambda \bz_k+\phi(\bx_{k+1})
\end{align*}
where $\bJ_k$ is an estimate of $\bar c$, $\alpha_k>0$ is a sequence of step sizes, and $\lambda \in [0,1]$.

We will focus on the case that $\lambda \in [0,1)$. The case of $\lambda=1$ requires alternate techniques outside the scope of the paper. See~\cite{konda2003onactor}.

As discussed in \cite{tsitsiklis1999average}, under standard stochastic approximation step size assumptions, $\sum_{k=0}^{\infty}\alpha_k=\infty$, $\sum_{k=0}^{\infty}\alpha_k^2<\infty$, and $\lambda \in [0,1)$, the values of $(\bJ_k,\btheta_k)$ from $\mathrm{TD}(\lambda)$ behave, asymptotically, like solutions to the differential equation:
\begin{align*}
  \frac{dJ_t}{dt} &= \bar c-J_t \\
  \frac{d\theta_t}{dt} &= b -g J_t +A\theta_t, 
\end{align*}
where
\begin{align*}
  \phi(x)&=\begin{bmatrix}\phi_1(x) & \cdots & \phi_n(x)\end{bmatrix}^\top \\
  P^{(\lambda)} &= (1-\lambda)\sum_{i=0}^{\infty}\lambda^i P^{i+1}\\
  b_i&=\langle \phi_i,c\rangle \\
  g_i&=\mu \phi_i\\
  A_{ij} &=\langle \phi_i,(P^{(\lambda)}-I) \phi_j\rangle.  
\end{align*}

Note that in the case that $\lambda=0$, the algorithm simplifies, since we can take $\bz_k=\phi(\bx_k)$. Furthermore, in this case, $P^{(0)}=P$.

If $A$ is Hurwitz (i.e. all its eigenvalues have negative real parts), then the differential equation has a unique, asymptotically stable equilibrium at $\begin{bmatrix}\bar c \\ A^{-1}(g\bar c-b)\end{bmatrix}$. In this case, $\begin{bmatrix}\bJ_k \\\btheta_k\end{bmatrix}$ will converge to this equilibrium. 

A variety of papers have derived sufficient conditions for $A$ to be Hurwitz under different assumptions. The case of finite state spaces and $\lambda \in [0,1)$ is examined in \cite{tsitsiklis1999average}, the case that $\cX$ is a Polish space, $P$ is $V$-uniformly geometrically ergodic, and $\lambda \in (0,1]$ is examined in \cite{konda2003onactor}.

The following lemma shows that when $\lambda\in [0,1)$, irreducibility and spectral gap properties of $P^{(\lambda)}$ can be reduced to irreducibility and spectral gap properties of $P$. It will be used to give sufficient conditions for $A$ to be Hurwitz.

\begin{lemma}
  \label{lem:sampledReduction}
Let $P$ be a Markov kernel and $\lambda \in [0,1)$. The following properties hold.
\begin{enumerate}
  \item \label{it:sampledMarkov} $P^{(\lambda)}$ is a Markov kernel. 
  \item If $\mu$ is an invariant probability distribution for $P$, then $\mu$ is an invariant probability distribution for $P^{(\lambda)}$.
  \item \label{it:sampledIrreducible} If $P$ is irreducible, then so is $P^{(\lambda)}$.
  \item 
    \label {it:sampledDirichlet} If $\cE_{P}$ and $\cE_{P^{(\lambda)}}$ are the Dirichlet forms of $P$ and $P^{(\lambda)}$ with respect to an invariant probability distribution, $\mu$, then $\cE_{P^{(\lambda)}}(f,f)\ge(1-\lambda)\cE_P(f,f)$. 
  \item  \label{it:sampledGap} $\gap(P^{(\lambda)})\ge (1-\lambda) \gap(P)$.
\end{enumerate}
\end{lemma}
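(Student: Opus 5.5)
We will verify the five items in order, since the later items build on the earlier ones.

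For item~\ref{it:sampledMarkov}, we write $P^{(\lambda)}(x,A)=(1-\lambda)\sum_{i\ge 0}\lambda^i P^{i+1}(x,A)$. This is a convex combination of Markov kernels, with weights $(1-\lambda)\lambda^i$ summing to $1$ because $\lambda<1$. Nonnegativity and total mass $1$ are then immediate, and countable additivity in $A$ follows from monotone convergence (Tonelli) to interchange the sums. Measurability in $x$ holds because the partial sums are measurable and converge pointwise. Invariance follows the same way: since $\mu P^{i+1}=\mu$ for each $i$, Tonelli gives $\mu P^{(\lambda)}=(1-\lambda)\sum_i\lambda^i\mu=\mu$.

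For item~\ref{it:sampledIrreducible}, we mimic the proof of Lemma~\ref{lem:PrIrreducible}. Let $C$ be an accessible small set for $P$, so $P^n(x,C)>0$ for some $n\ge1$ and $P^m(x,\cdot)\ge\nu$ on $C$. The key observation is that $P^{(\lambda)}\ge(1-\lambda)\lambda^{i}P^{i+1}$ for every $i$, so in particular $P^{(\lambda)}\ge(1-\lambda)P$ as kernels. Iterating, we get $(P^{(\lambda)})^n\ge(1-\lambda)^nP^n$. Then $(P^{(\lambda)})^n(x,C)>0$ and $(P^{(\lambda)})^m(x,B)\ge(1-\lambda)^m\nu(B)$ for $x\in C$, so $C$ is an accessible small set for $P^{(\lambda)}$.

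For item~\ref{it:sampledDirichlet}, which is the substantive step, we use the integral form \eqref{eq:dirichletAlt}, which exhibits $\cE$ as linear in the kernel:
$$
\cE_{P^{(\lambda)}}(f,f)=(1-\lambda)\sum_{i\ge0}\lambda^i\cE_{P^{i+1}}(f,f),
$$
with the interchange justified since all terms are nonnegative. The main obstacle is that $\cE_{P^{k}}$ is not obviously comparable to $\cE_P$ for $k\ge2$ in the non-reversible case. We avoid this by discarding every term except $i=0$, using only $\cE_{P^{i+1}}(f,f)\ge0$. Each such term is nonnegative because it is an integral of $|f(x)-f(y)|^2$ against the measure $\mu(dx)P^{i+1}(x,dy)$, and $\mu$ is invariant for $P^{i+1}$, so \eqref{eq:dirichletAlt} applies to $P^{i+1}$. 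The $i=0$ term gives exactly $(1-\lambda)\cE_P(f,f)$. If we wanted to avoid infinite-sum interchanges, we could instead use the operator identity $\Re\langle f,(I-P^{(\lambda)})f\rangle=\sum(1-\lambda)\lambda^i\Re\langle f,(I-P^{i+1})f\rangle$, which converges in operator norm since $\|P^{k}\|\le1$, and bound each term with $i\ge1$ below by $\|f\|^2-\|f\|\|P^{i+1}f\|\ge0$.

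Item~\ref{it:sampledGap} then follows directly from the definition \eqref{eq:gap}. For any $f$ with $0<\var(f)<\infty$, item~\ref{it:sampledDirichlet} gives $\cE_{P^{(\lambda)}}(f,f)/\var(f)\ge(1-\lambda)\cE_P(f,f)/\var(f)\ge(1-\lambda)\gap(P)$. Note that $\var(f)$ depends only on $\mu$, which is common to both kernels by item~2. Taking the infimum over such $f$ gives $\gap(P^{(\lambda)})\ge(1-\lambda)\gap(P)$.
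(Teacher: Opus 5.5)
Your proof is correct and, for items 4 and 5, matches the paper: both use linearity of the Dirichlet form \eqref{eq:dirichletAlt} in the kernel and discard nonnegative terms, except that the paper groups your tail $\sum_{i\ge 1}$ into the single $\mu$-invariant kernel $PP^{(\lambda)}$ via $P^{(\lambda)}=(1-\lambda)P+\lambda PP^{(\lambda)}$, which avoids the infinite-sum interchange. For items 1--3 the paper only cites standard properties of sampled chains, so your explicit checks, including the domination $(P^{(\lambda)})^n\ge(1-\lambda)^nP^n$ for irreducibility, supply details the paper omits.
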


\begin{proof}
  Note that for $\lambda\in [0,1)$, $P^{(\lambda)}$ is a specific type of \emph{sampled chain}, as examined in  \cite{meyn2012markov,douc2018markov}. In this case,
  Properties~\ref{it:sampledMarkov} -- \ref{it:sampledIrreducible} are standard properties of sampled chains, and so the proofs are omitted. When $\lambda = 0$, $P^{(0)}=P$, and so the all of the properties are immediate. 

  Only properties \ref{it:sampledDirichlet} and  \ref{it:sampledGap} are proved.

  Say that $P$ is a Markov kernel with invariant probability distribution $\mu$.
  Note that 
  $$
  P^{(\lambda)} = (1-\lambda) P + \lambda P P^{(\lambda)},
  $$
  where $P P^{(\lambda)}$ is also a Markov kernel with invariant probability distribution $\mu$.  
  
  Let $\cE_P$, $\cE_{P^{(\lambda)}}$, and $\cE_{P P^{(\lambda)}}$  be the Dirichlet forms of $P$, $P^{(\lambda)}$, and $P P^{(\lambda)}$, respectively.
  Using~\eqref{eq:dirichletAlt} shows that:
  \begin{align*}
    \cE_{P^{(\lambda)}}(f,f) &= (1-\lambda)\cE_P(f,f)+\lambda \cE_{PP^{(\lambda)}}(f,f) \\
                             &\ge (1-\lambda)\cE_{P}(f,f).
  \end{align*}

  When $\var(f)>0$, dividing both sides by $\var(f)$ and minimizing over $f$ gives the result. 
\end{proof}

Proposition~\ref{prop:irreducibleHurwitz}, below, substantially generalizes the existing sufficient conditions for $A$ to be Hurwitz.  It shows that $A$ will be Hurwitz as long as $P$ is irreducible and common conditions on the features, $\phi$, hold. Proposition~\ref{prop:gapToHurwitz} shows that explicit bounds on the real parts of the eigenvalues (and an explicit Lyapunov inequality) can be given when $\gap(P)>0$.

\begin{proposition}
  \label{prop:irreducibleHurwitz}
  Let $P$ be a Markov kernel with invariant probability distribution $\mu$. 
   Let $\phi_i\in L^2(\mu)$ for $i=1,\ldots,n$ and define the matrices $A$ and $S\in\bbC^{n\times n}$ by:
  \begin{align*}
    A_{ij}&=\langle \phi_i,(P^{(\lambda)}-I)\phi_j\rangle.
  \end{align*}

  If the following conditions hold:
  \begin{enumerate}
  \item \label{it:irreducible} $P$ is irreducible,
  \item  \label{it:li}
    the functions $\{\phi_1,\ldots,\phi_n\}$ are linearly independent,
  \item  \label{it:const}
    the constant function, $\ones$, is not in the span of $\{\phi_1,\ldots,\phi_n\}$,
  \end{enumerate}
  then $A+A^\star$ is negative definite. In particular, $A$ is Hurwitz.
\end{proposition}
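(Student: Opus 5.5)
The plan is to compute the quadratic form of $A+A^\star$ directly and identify it with a Dirichlet form. For $v\in\bbC^n$, set $f=\sum_j v_j\phi_j\in L^2(\mu)$. Since $\phi_i$ is real-valued, sesquilinearity of the inner product gives
$$
v^\star A v=\sum_{i,j}\bar v_i v_j\langle \phi_i,(P^{(\lambda)}-I)\phi_j\rangle=\langle f,(P^{(\lambda)}-I)f\rangle .
$$
Hence
$$
v^\star(A+A^\star)v=2\Re\bigl(v^\star Av\bigr)=-2\,\cE_{P^{(\lambda)}}(f,f).
$$
By item~\ref{it:sampledDirichlet} of Lemma~\ref{lem:sampledReduction}, $\cE_{P^{(\lambda)}}(f,f)\ge(1-\lambda)\cE_P(f,f)$. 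So it suffices to show that $\cE_P(f,f)>0$ whenever $v\neq 0$. Negative definiteness follows because this is a finite-dimensional Hermitian form, so strict negativity on nonzero vectors is exactly negative definiteness.

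Next I use \eqref{eq:dirichletAlt}, which gives $\cE_P(f,f)=\langle f,(I-P_a)f\rangle\ge 0$. This quantity is nonnegative because $P_a$ is a self-adjoint Markov operator, so $\|P_a\|\le 1$. It remains to rule out equality. Suppose $\cE_P(f,f)=0$. Since $I-P_a$ is a positive bounded self-adjoint operator, its square root $(I-P_a)^{1/2}$ exists, and $\|(I-P_a)^{1/2}f\|^2=0$. Therefore $(I-P_a)f=0$, that is, $P_af=f$.

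By Lemma~\ref{lem:PrIrreducible}, $P_a$ is irreducible. As in the proof of Lemma~\ref{lem:gapFromSpectrum}, via Proposition 2.1.2 of \cite{douc2018markov}, the $L^2(\mu)$ eigenfunctions of $P_a$ at eigenvalue $1$ are constants, so $f=c\ones$ $\mu$-a.e. If $c\neq0$, this puts $\ones$ in the span of the $\phi_i$, contradicting condition~\ref{it:const}. If $c=0$, then $f=0$, which forces $v=0$ by linear independence, condition~\ref{it:li}. Either way $\cE_P(f,f)>0$ for $v\ne0$.

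Finally, $A$ is Hurwitz. If $Av=\gamma v$ with $\|v\|=1$, then $2\Re\gamma=v^\star(A+A^\star)v<0$.

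The main obstacle is the step that invokes irreducibility to conclude that harmonic $L^2$ functions of $P_a$ are constant. The issue is whether the cited result applies to complex-valued $L^2(\mu)$ functions rather than bounded ones. I would handle it by splitting $f$ into real and imaginary parts. Since $P_a$ is a real kernel, each part is separately harmonic. I would then either cite the $L^2$ version of the result, or use the representation $\frac12\int\!\int \mu(dx)P(x,dy)|f(x)-f(y)|^2=0$. That representation says $f(y)=f(x)$ for $\mu\otimes P$-a.e. $(x,y)$. Combined with accessibility of a small set with minorization $\delta\mu$, it forces $f$ to be $\mu$-a.e. constant.
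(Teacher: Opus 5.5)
Your proof is correct and follows essentially the same route as the paper. You identify $v^\star(A+A^\star)v$ with $-2\mathcal{E}_{P^{(\lambda)}}(f,f)\le -2(1-\lambda)\mathcal{E}_P(f,f)$, then use irreducibility of $P_a$ (Lemma~\ref{lem:PrIrreducible}) to show that $\mathcal{E}_P(f,f)=0$ forces $f$ to be $\mu$-a.e.\ constant, which conditions~\ref{it:li} and~\ref{it:const} rule out for $v\neq 0$. The differences are minor: you get $P_af=f$ directly from $\|(I-P_a)^{1/2}f\|^2=0$ instead of the paper's spectral-resolution argument that $E(\mathrm{spec}(P_a)\setminus\{1\})f=0$, and the real-valuedness of the $\phi_i$ you invoke is not needed, since $v^\star Av=\langle f,(P^{(\lambda)}-I)f\rangle$ follows from sesquilinearity alone.
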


The conditions on $\phi_i$ are common. See \cite{tsitsiklis1999average}.

\begin{proof}
  Let  $v\in\bbC^n$ and set $w\in L^2(\mu)$ by $w=\sum_{i=1}^n \phi_i v_i$.
  Let $v^\star$ and $A^\star$ denote the respective Hermitian conjugates. 
  Then a  direct calculation gives 
  \begin{align}
    \nonumber
    v^\star \left(\frac{1}{2}A+\frac{1}{2}A^\star \right)v
    &=\frac{1}{2}v^\star Av+\frac{1}{2}\overline{v^\star A v} 
    \\
    \nonumber
    & =\frac{1}{2}\langle w,(P^{(\lambda)}-I)w\rangle + \frac{1}{2}\langle (P^{(\lambda)}-I)w,w\rangle \\
    \nonumber
    &=-\cE_{P^{(\lambda)}}(w,w)\\
  \label{eq:matrixToDirichlet}
    &\le -(1-\lambda) \cE_P(w,w)\\
\nonumber
    &\le 0.
  \end{align}
  Here, the first inequality follows from (\ref{eq:dirichletAlt}), while the second is from Property \ref{it:sampledDirichlet} of Lemma~\ref{lem:sampledReduction}. 

  So, $\frac{1}{2}(A+A^\star)$ is negative semidefinite. Then we will have that $A$ must be Hurwitz if we can show that $\frac{1}{2}(A+A^\star)$ has no eigenvalue at $0$.

  Let $w=\sum_{i=1}^n\phi_i v_i$ and assume that $\frac{1}{2}v^\star(A+A^\star)v=0$, which, according to (\ref{eq:matrixToDirichlet}), is equivalent to having $\cE_P(w,w)=0$. In the rest of the proof, we will denote $\cE_P=\cE$ for simpler notation. 

  We will show that $w$ must be a constant $\mu$-almost everywhere. But then Condition~\ref{it:const} implies that the only constant function in the span of $\{\phi_1,\ldots,\phi_n\}$ is $0$. Then linear independence would imply that $v=0$. So, we must have $\frac{1}{2}v^\star(A+A^\star)v<0$ if $v\ne 0$.

  Now we turn to proving  that $w$ must be constant if $\cE(w,w)=0$.

  Let $P_a = \frac{1}{2}(P+P^\star)$ be the additive reversiblization. According to (\ref{eq:dirichletAlt}), we  have
  $$
  \cE(w,w)=\langle w,(I-P_a)w\rangle. 
  $$

  Lemma~\ref{lem:PrIrreducible} shows that $P_a$ is irreducible since $P$ is.
Thus, we have that $1$ is an eigenvalue of $P_a$ with multiplicity $1$, and every corresponding eigenvector must be constant $\mu$-almost everywhere. See Proposition 22.1.2 of \cite{douc2018markov}.

  Furthermore, $P_a$ is self-adjoint with its spectrum satisfying $\spec(P_a)\subset [-1,1]$. It follows from the spectral theorem that the Dirichlet form can be written as:
  $$
  \cE(w,w)=\langle w,(I-P_a)w\rangle = \int_{\spec(P_a)}(1-\lambda)\langle w,E(d\lambda) w\rangle, 
  $$
  where $E$ is a uniquely defined resolution of the identity. In particular, $\langle w,E(\cdot) w\rangle$ is a non-negative measure over the Borel subsets of $\spec(P_a)$.

  We will show that $\langle w, E(\spec(P_a)\setminus\{1\}) w\rangle =0$. To this end, let $S_n = \spec(P_a)\cap \left[-1,1-\frac{1}{n} \right]$. Note that $\spec(P_a)\setminus \{1\} = \bigcup_{n=1}^{\infty} S_n$. Furthermore, $\cE(w,w)=0$ implies that
  \begin{align*}
    0&=
    \int_{\spec(P_a)}(1-\lambda) \langle w, E(d\lambda) w\rangle\\
    &\ge
      \int_{S_n} (1-\lambda) \langle w, E(d\lambda) w\rangle \\
    &\ge \frac{1}{n} \langle w, E(S_n) w\rangle
  \end{align*}
  And so, $\langle w,E(S_n) w\rangle =0$ for all $n\ge 1$, which implies that $\langle w,E(\spec(P_a)\setminus \{1\})w\rangle=0$.

  Now, since $E(\spec(P_a)\setminus\{1\})$ is a self-adjoint projection, we have that
  $$
\langle w, E(\spec(P_a)\setminus\{1\}) w\rangle = 
\langle  E(\spec(P_a)\setminus\{1\}) w, E(\spec(P_a)\setminus\{1\}) w\rangle = 0,
$$
which implies that $E(\spec(P_a)\setminus\{1\})w=0$ (in an $L^2(\mu)$ sense).

Furthermore, since $E$ is a resolution of the identity, we have (in an $L^2(\mu)$ sense):
\begin{align*}
  w&=\left(E(\spec(P_a)\setminus\{1\})+E(\{1\})\right)w \\
   &=E(\{1\})w,
\end{align*}
which implies that $w$ is in the range space of $E(\{1\})$. Theorem 12.29 of \cite{rudin1991functional} implies that the range space of $E(\{1\})$ is equal to the null space of $P_a-I$. In particular, $w$ must be in the eigenspace of $P_a$ corresponding to the eigenvalue $1$. So, as discussed above, $w$ must be constant $\mu$-almost everywhere. Thus, Conditions~\ref{it:li} and \ref{it:const} imply that $v$ must be $0$, and $\frac{1}{2}(A+A^\star)$ is negative definite. 
\end{proof}

\begin{proposition}
  \label{prop:gapToHurwitz}
  Let $P$ be a Markov kernel with invariant probability distribution $\mu$. Let $\phi_1,\ldots,\phi_n$ and $A$ be as in Proposition~\ref{prop:irreducibleHurwitz} and $\lambda\in [0,1)$.
  Define the matrix $S\in\bbC^{n\times n}$ by
  \begin{align*}
  S_{ij}&=\langle \phi_i,\phi_j\rangle - \overline{(\mu \phi_i)}(\mu\phi_j).        
  \end{align*}
  
  Let $\preceq$ denote the positive semidefinite partial order, and let $\gamma$ denote the smallest eigenvalue of $S$. Then $A$ satisfies the linear Lyapunov inequality:
  \begin{equation}
    \label{eq:linLyap}
  \frac{1}{2}(A+A^\star)\preceq -(1-\lambda)\gap(P)\gamma I.
\end{equation}
In particular, the eigenvalues of $A$ have real parts at most $-(1-\lambda)\gap(P)\gamma$.

  If the following conditions hold:
  \begin{enumerate}
  \item \label{it:gap} $\gap(P)>0$ 
  \item  \label{it:li2}
    the functions $\{\phi_1,\ldots,\phi_n\}$ are linearly independent,
  \item  \label{it:const2}
    the constant function, $\ones$, is not in the span of $\{\phi_1,\ldots,\phi_n\}$, 
  \end{enumerate}
 then $(1-\lambda)\gap(P)\gamma >0$ and so $A$ is Hurwitz.
  \end{proposition}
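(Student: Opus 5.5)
The plan is to reuse the quadratic-form computation from the proof of Proposition~\ref{prop:irreducibleHurwitz} and then bound the Dirichlet form below by the variance using the definition \eqref{eq:gap}. Fix $v\in\bbC^n$ and set $w=\sum_{i=1}^n\phi_i v_i\in L^2(\mu)$. The chain of equalities and inequalities leading to \eqref{eq:matrixToDirichlet} did not use irreducibility; it used only \eqref{eq:dirichletAlt} and Property~\ref{it:sampledDirichlet} of Lemma~\ref{lem:sampledReduction}. It therefore gives
$$
v^\star\left(\tfrac{1}{2}A+\tfrac{1}{2}A^\star\right)v=-\cE_{P^{(\lambda)}}(w,w)\le -(1-\lambda)\cE_P(w,w).
$$

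Next, I would show that $\cE_P(w,w)\ge \gap(P)\var(w)$ for every $w\in L^2(\mu)$. When $0<\var(w)<\infty$, this is exactly the definition \eqref{eq:gap}. When $\var(w)=0$, the right side is $0$ and $\cE_P(w,w)\ge 0$ by \eqref{eq:dirichletAlt}. Here I should note that $\gap(P)\ge 0$, so the sign does not flip, and that $\var(w)<\infty$ holds automatically since $w\in L^2(\mu)$.

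Then I compute the variance as a quadratic form in $v$. Since $\mu w=\sum_j(\mu\phi_j)v_j$ and $\langle w,w\rangle=\sum_{i,j}\bar v_i\langle\phi_i,\phi_j\rangle v_j$, we get
$$
\var(w)=\sum_{i,j}\bar v_i\left(\langle\phi_i,\phi_j\rangle-\overline{(\mu\phi_i)}(\mu\phi_j)\right)v_j=v^\star S v.
$$
This shows $S$ is Hermitian and positive semidefinite, so $\gamma\ge0$ and $v^\star S v\ge \gamma\, v^\star v$. Combining the pieces gives
$$
v^\star\tfrac12(A+A^\star)v\le-(1-\lambda)\gap(P)\gamma\, v^\star v,
$$
which is \eqref{eq:linLyap}. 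For the eigenvalue claim, take $Av=\alpha v$ with $\|v\|=1$. Then $\Re\alpha=v^\star\tfrac12(A+A^\star)v\le-(1-\lambda)\gap(P)\gamma$.

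For the final claim, it remains to show $\gamma>0$ under Conditions~\ref{it:li2} and~\ref{it:const2}. This is the only step requiring an argument beyond bookkeeping. Suppose $v^\star Sv=0$. Then $\var(w)=0$, so $w$ equals the constant $\mu w$ $\mu$-almost everywhere, i.e.\ $w=(\mu w)\ones$ in $L^2(\mu)$. Condition~\ref{it:const2} forces this constant to be $0$, so $w=0$, and linear independence (Condition~\ref{it:li2}) gives $v=0$. Hence $S$ is positive definite, so $\gamma>0$. Since also $\gap(P)>0$ and $1-\lambda>0$, the bound is strictly negative and $A$ is Hurwitz.

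The one subtlety is that ``linearly independent'' and ``not in the span'' must be read in the $L^2(\mu)$ sense, i.e.\ modulo $\mu$-null sets, as in Proposition~\ref{prop:irreducibleHurwitz}.
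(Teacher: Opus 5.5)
Your proposal is correct and follows the paper's proof essentially step for step. Both use the chain $\frac{1}{2}v^\star(A+A^\star)v\le -(1-\lambda)\cE_P(w,w)\le -(1-\lambda)\gap(P)\var(w)= -(1-\lambda)\gap(P)\,v^\star S v\le -(1-\lambda)\gap(P)\gamma\|v\|_2^2$, the unit-eigenvector argument for the real parts, and positive definiteness of $S$ from linear independence and $\ones\notin\mathrm{span}\{\phi_i\}$. Your explicit handling of the case $\var(w)=0$ and of the nonnegativity of $\gap(P)$ fills in small details the paper leaves implicit.
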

 \begin{proof}

   Let  $v\in\bbC^n$ and set $w\in L^2(\mu)$ by $w=\sum_{i=1}^n \phi_i v_i$. Then
   using (\ref{eq:matrixToDirichlet}), followed by the definition of the  and then the definition of $S$ gives 
  \begin{align}
    \nonumber
    \frac{1}{2}v^\star(A+A^\star)v
    &\le -(1-\lambda)\cE_P(w,w)\\
    \nonumber
    &\le -(1-\lambda)\mathrm{gap}(P)\var(w)\\
    \nonumber
    &= -(1-\lambda)\mathrm{gap}(P)v^\star S v \\
    \nonumber
    &\le -(1-\lambda)\gap(P) \gamma \|v\|_2^2,
  \end{align}
  where $\|\cdot\|_2$ denotes the Euclidean norm. 

  Thus, (\ref{eq:linLyap}) holds. The bound on the eigenvalues follows by taking $v$ to be an eigenvector of $A$ of unit norm. 

  Now, assume that $\{\phi_1,\ldots,\phi_n\}$ are linearly independent, and $\ones$ is not in their span. If $v\ne 0$, then $w$ cannot be $\mu$-almost everywhere constant.  It follows that $\var(w)=v^\star S v>0$, so that $S$ is positive definite. In particular, $\gamma >0$.

  If, furthermore, $\gap(P)>0$, the bound satisfies $-(1-\lambda)\gap(P)\gamma <0$. 
\end{proof}

A subtle condition can arise in which $\gap(P)=0$, but $P$ is irreducible. In this case, by Proposition~\ref{prop:irreducibleHurwitz} we can show that $A$ will be Hurwitz, but we cannot get a non-trivial bound on its eigenvalues via the spectral gap. 

In Section~\ref{sec:noGap}, we construct a Markov chain which is irreducible and $V$-uniformly geometrically ergodic, with $\gap(P)=\absgap(P)=0$. In this case, Proposition~\ref{prop:irreducibleHurwitz} shows that $A$ will be Hurwitz for $\lambda \in [0,1)$, and the earlier results of \cite{konda2003onactor} show that $A$ will be Hurwitz when $\lambda \in (0,1)$. But, again, no non-trivial  bound on the eigenvalues via spectral gaps can be given.

\section{Sufficient Conditions for Positive Spectral Gaps}
\label{sec:gaps}

Proposition~\ref{prop:gapToHurwitz} shows that a key condition to get $A$ Hurwitz is that the Markov chain has non-zero spectral gap. In this section, we describe a collection of sufficient conditions that ensure a positive spectral gap.
Subsection~\ref{ss:kernel} shows that $P$ has a positive gap if it admits a kernel which is sufficiently well-behaved. This result, in particular, suffices to prove that irreducible Markov chains over finite state spaces have positive spectral gaps.  Subsection~\ref{ss:comparison} shows how to derive bounds on spectral gaps by comparison of one Markov operator with another. In particular, the results show that uniformly geometrically ergodic Markov chains have positive spectral gaps. Subsection~\ref{ss:linGauss} gives bounds on spectral gaps in the commonly studied case of linear Gaussian systems.

\subsection{Kernel Conditions}
\label{ss:kernel}
The following proposition gives a sufficient condition for a positive spectral gap which is often satisfied in practice. Below, $\mu\otimes\mu$ denotes the product measure of $\mu$ over $\cX\times\cX$. In particular, it will be satisfied for Markov chains on finite state spaces.

\begin{lemma}
  \label{lem:kernelToGap}
  Let $P$ be an irreducible Markov kernel with invariant probability measure, $\mu$. If $P$ admits a kernel density with respect to $\mu$, $K:\cX\times\cX\to \bbR$, such that $K\in L^2(\mu\otimes \mu)$,  then $\mathrm{gap}(P)>0$. 
\end{lemma}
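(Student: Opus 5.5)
The approach is to show that $P_a$ is a compact self-adjoint operator on $L^2(\mu)$. Its spectrum away from $0$ then consists of isolated eigenvalues of finite multiplicity. Combining this with irreducibility (the eigenvalue $1$ is simple), we get that $1$ is an isolated point of $\spec(P_a)$, and Lemma~\ref{lem:gapFromSpectrum} finishes the argument.

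\emph{Step 1: compactness.} Since $K\in L^2(\mu\otimes\mu)$, the operator $Pf(x)=\int_{\cX}\mu(dy)K(x,y)f(y)$ is a Hilbert--Schmidt integral operator on $L^2(\mu)$, hence compact. The adjoint $P^\star$ has kernel $K(y,x)$, which also lies in $L^2(\mu\otimes\mu)$, so $P^\star$ is Hilbert--Schmidt as well. (Equivalently, the adjoint of a compact operator is compact.) Therefore $P_a=\frac{1}{2}(P+P^\star)$ is a compact self-adjoint operator, with spectrum contained in $[-1,1]$.

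\emph{Step 2: spectral structure.} By the spectral theorem for compact self-adjoint operators, $\spec(P_a)\setminus\{0\}$ is a countable set of eigenvalues of finite multiplicity. These eigenvalues can accumulate only at $0$. In particular, $1$ is an isolated point of $\spec(P_a)$. By Lemma~\ref{lem:PrIrreducible}, $P_a$ is irreducible, and, as used in the proof of Lemma~\ref{lem:gapFromSpectrum}, its eigenvalue $1$ is simple with constant eigenfunctions.

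\emph{Step 3: conclusion.} Consider $\spec(P_a)\setminus\{1\}$. Every point of it is either $0$ or an eigenvalue of $P_a$ restricted to $L_0^2(\mu)$. Since $1$ is isolated and has no further eigenfunctions in $L_0^2(\mu)$, we get $\sup(\spec(P_a)\setminus\{1\})=s<1$. Indeed, the nonzero eigenvalues near $1$ form a finite set because they cannot accumulate at $1\neq 0$, and none of them equals $1$ on $L_0^2(\mu)$. Lemma~\ref{lem:gapFromSpectrum} then gives $\gap(P)=1-s>0$.

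\emph{Main obstacle.} The delicate point is justifying that no element of $\spec(P_a|L_0^2(\mu))$ equals or approaches $1$. This combines the isolation coming from compactness with simplicity coming from irreducibility. We also need the supremum to be attained or at least strictly below $1$; this holds because the part of the spectrum in $[1/2,1]$ is a finite set of eigenvalues, all different from $1$. A minor technical point is that $K$ is real-valued and the Hilbert space is complex, which does not affect the Hilbert--Schmidt property.
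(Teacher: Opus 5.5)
Your proposal is correct and follows essentially the same route as the paper: $P_a$ is Hilbert--Schmidt (hence compact) and self-adjoint, since its kernel $\frac{1}{2}(K(x,y)+K(y,x))$ lies in $L^2(\mu\otimes\mu)$. Its spectrum can therefore accumulate only at $0$, which gives $\sup(\mathrm{spec}(P_a)\setminus\{1\})<1$, and Lemma~\ref{lem:gapFromSpectrum} concludes. The differences are cosmetic. You get compactness of $P_a$ from compactness of $P$ and $P^\star$ separately rather than from the symmetrized kernel. You also spell out the role of simplicity of the eigenvalue $1$, which is strictly only needed inside Lemma~\ref{lem:gapFromSpectrum}; isolation of $1$ in the spectrum already forces the supremum to be strictly below $1$.
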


\begin{proof}
  For compact notation, let $P_a=\frac{1}{2}(P+P^\star)$ denote the additive reversibilization of $P$. Then $P_a$ admits a kernel density with respect to $\mu$, $K_r$ defined by $K_r(x,y)=\frac{1}{2}(K(x,y)+K(y,x))$. The assumption that $K\in L^2(\mu\otimes\mu)$ implies that $K_r\in L^2(\mu\otimes\mu)$, as well.

  So, we have that
  \begin{equation}
    \label{eq:HSoperator}
    P_af(x)=\int_{\cX} \mu(dy)K_r(x,y)f(y)
  \end{equation}

  A classical result shows that if $P_a$ can be represented in the form of (\ref{eq:HSoperator}), with $\mu$ a positive measure and $K_r\in L^2(\mu\otimes\mu)$, then $P_a$ is a compact operator. See, e.g. Proposition 4.7 of \cite{conway2019course}. 

  It follows that the only possible accumulation point of $\spec(P_a)$ is at $0$. It follows that $\sup\left(\spec(P_a)\setminus\{1\}\right)<1$, and so, Lemma~\ref{lem:gapFromSpectrum} implies the result. 
\end{proof}

\begin{corollary}
  \label{cor:finiteKernelDensity}
 Let $P$ be an irreducible Markov kernel with invariant probability distribution $\mu$ over a finite set $\cX$. 
 If $\mu(\{x\})>0$ for all $x\in\cX$, then $\mathrm{gap}(P)>0$. 
\end{corollary}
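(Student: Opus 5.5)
The plan is to deduce the corollary directly from Lemma~\ref{lem:kernelToGap}: on a finite state space with $\mu$ strictly positive, the kernel density is a bounded function on a finite set, hence square integrable. The argument proceeds in three steps.

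First, I will write down the kernel density explicitly. Identify $\cX=\{1,\ldots,n\}$, $P$ with its stochastic matrix, and $\mu$ with a row vector, as in Section~\ref{sec:background}. Since $\mu_j=\mu(\{j\})>0$ for every $j$, define $K_{ij}=P_{ij}/\mu_j$, i.e.\ $K=PD^{-1}$ with $D=\diag(\mu)$. Then for any $A\subseteq\cX$,
$$
\int_A\mu(dy)K(x,y)=\sum_{j\in A}\mu_j\frac{P_{xj}}{\mu_j}=P(x,A),
$$
so $P$ admits a kernel density with respect to $\mu$.

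Second, I will check the integrability hypothesis. Because $\cX$ is finite and each $\mu_j>0$, every entry satisfies $0\le K_{ij}\le 1/\min_j\mu_j<\infty$. It follows that
$$
\int\!\!\int |K(x,y)|^2\,\mu(dx)\mu(dy)=\sum_{i,j}\mu_i\mu_j K_{ij}^2<\infty,
$$
so $K\in L^2(\mu\otimes\mu)$.

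Third, I will conclude. Irreducibility of $P$ is assumed in the corollary, so all hypotheses of Lemma~\ref{lem:kernelToGap} hold, and it gives $\gap(P)>0$.

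No step is a real obstacle; the only care needed is to use positivity of $\mu$ so that $K$ is well defined. As a side remark, for an irreducible finite chain $\mu$ is automatically positive, but the corollary states this as a hypothesis, so I will simply use it. Alternatively, one could observe that $P_a$ is a finite matrix, so its spectrum is finite and Lemma~\ref{lem:gapFromSpectrum} applies directly. I will present the kernel route, since it matches the lemma just proved.
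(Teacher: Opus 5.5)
Your proposal is correct and matches the paper's proof: define $K(x,y)=P(x,\{y\})/\mu(\{y\})$, note that $K\in L^2(\mu\otimes\mu)$ because the state space is finite and $\mu$ is strictly positive, and invoke Lemma~\ref{lem:kernelToGap}. Your side remark that $\mu$ is automatically positive holds only for the communicating-class notion of irreducibility, not for the paper's accessible-small-set notion (transient states can carry zero mass), but you rightly do not rely on it.
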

\begin{proof}
  If $\mu(\{x\})>0$ for all $x\in\cX$, then $P$ admits
  a kernel density, $K(x,y)=P(x,\{y\})/\mu(\{y\})$, and $K\in L^2(\mu \otimes \mu)$ since the space is finite. 
\end{proof}

\subsection{Comparison Conditions}
\label{ss:comparison}

In this subsection, we show how $\gap(P)$ can be bounded below in terms of properties of an auxiliary Markov chain or probability measure. In particular, Lemma~\ref{lem:absGap2Gap} shows that if $\absgap(P)>0$ then $\gap(P)>0$, while Proposition~\ref{prop:doeblinGap} shows that if $P$ is uniformly geometrically ergodic, then $\gap(P)>0$.

The following is an extension of a classical comparison lemma (proved for reversible chains on a finite state space in \cite{levin2017markov}) for general Markov kernels. Closely related comparison theorems for non-reversible finite-state Markov chains  are given in \cite{chung2005laplacians}.  

\begin{lemma}
  \label{lem:comparison}
  Let $P$ and $Q$ be Markov kernels over $(\cX,\cB)$ with respective invariant probability distributions $\mu_P$ and $\mu_Q$. If
  \begin{itemize}
  \item $\mu_P \ll \mu_Q$
  \item $\mu_Q \ll \mu_P$
  \item there exists $\epsilon >0$ such that $P(x,A) \ge \epsilon Q(x,A)$ for all $x\in\cX$ and all $A\in\cB$,
  \end{itemize}
  then
  $$
  \gap(P)\ge \frac{\epsilon \gap(Q)}{ \left\|\frac{d\mu_P}{d\mu_Q} \right\|_{L^{\infty}(\mu_Q)}\left\| \frac{d\mu_{Q}}{d\mu_P}\right\|_{L^{\infty}(\mu_P)} }.  
  $$
\end{lemma}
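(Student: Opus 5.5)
The plan is the standard comparison argument: compare the Dirichlet forms pointwise, then compare the variances, both via the Radon--Nikodym derivatives. Write $\cE_P$, $\cE_Q$ for the Dirichlet forms and $\var_P$, $\var_Q$ for the variances with respect to $\mu_P$ and $\mu_Q$. Set
$$
M_1=\left\|\frac{d\mu_P}{d\mu_Q}\right\|_{L^\infty(\mu_Q)},\qquad M_2=\left\|\frac{d\mu_Q}{d\mu_P}\right\|_{L^\infty(\mu_P)}.
$$
Mutual absolute continuity makes $L^2(\mu_P)$ and $L^2(\mu_Q)$ agree as sets of equivalence classes whenever $M_1,M_2<\infty$; otherwise the bound is trivial. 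So it suffices to show, for every $f$ with $0<\var_P(f)<\infty$, that $\cE_P(f,f)\ge \epsilon M_2^{-1}\cE_Q(f,f)$ and $\var_P(f)\le M_1\var_Q(f)$. Then
$$
\frac{\cE_P(f,f)}{\var_P(f)}\ge \frac{\epsilon}{M_1M_2}\,\frac{\cE_Q(f,f)}{\var_Q(f)}\ge\frac{\epsilon\,\gap(Q)}{M_1M_2},
$$
where $\var_Q(f)>0$ follows from $\var_P(f)\le M_1\var_Q(f)$. Taking the infimum over $f$ gives the claim.

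\emph{Dirichlet forms.} Use the integral representation \eqref{eq:dirichletAlt}:
$$
\cE_P(f,f)=\frac12\int\int \mu_P(dx)P(x,dy)|f(x)-f(y)|^2 .
$$
The minorization $P(x,\cdot)\ge\epsilon Q(x,\cdot)$ is a setwise inequality of measures, so it extends to nonnegative measurable integrands, first for simple functions and then by monotone convergence. This gives $\int P(x,dy)|f(x)-f(y)|^2\ge\epsilon\int Q(x,dy)|f(x)-f(y)|^2$ for each $x$. Next, $\mu_P(dx)=\frac{d\mu_P}{d\mu_Q}(x)\mu_Q(dx)$, and $\frac{d\mu_P}{d\mu_Q}\ge M_2^{-1}$ $\mu_Q$-a.e. because $\frac{d\mu_P}{d\mu_Q}=\left(\frac{d\mu_Q}{d\mu_P}\right)^{-1}$ a.e. Integrating the nonnegative inner integral against $\mu_P$ therefore yields $\cE_P(f,f)\ge \epsilon M_2^{-1}\cE_Q(f,f)$.

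\emph{Variances.} Use the variational characterization $\var_\nu(f)=\inf_{c\in\bbC}\int|f-c|^2\,d\nu$, whose infimum is attained at $c=\nu f$. Taking $c=\mu_Q f$ gives
$$
\var_P(f)\le\int|f-\mu_Qf|^2\,d\mu_P=\int|f-\mu_Qf|^2\frac{d\mu_P}{d\mu_Q}\,d\mu_Q\le M_1\var_Q(f).
$$

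I expect the only delicate step to be the measure-theoretic bookkeeping. Three points need checking: that the setwise minorization legitimately passes to integrals of $|f(x)-f(y)|^2$, a nonnegative function that may be infinite; that the reciprocal relation between the two Radon--Nikodym derivatives holds a.e. under mutual absolute continuity; and that the admissible classes of $f$ in the two infima in \eqref{eq:gap} coincide, which holds because $\var_P$ and $\var_Q$ are comparable in both directions by the same argument with the roles of $P$ and $Q$ swapped. None of these should be hard.
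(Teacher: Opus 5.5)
Your proposal is correct and follows the paper's proof: it compares the Dirichlet forms through \eqref{eq:dirichletAlt} and the minorization $P\ge\epsilon Q$, then compares the variances by plugging $c=\mu_Q f$ into $\int|f-c|^2\,d\mu_P$ and changing measure to $\mu_Q$. The only differences are cosmetic. The paper rewrites $\cE_Q$ using $\mu_Q(dx)=\frac{d\mu_Q}{d\mu_P}(x)\,\mu_P(dx)$ and bounds $\frac{d\mu_Q}{d\mu_P}$ from above directly, so it never needs the reciprocal identity for the Radon--Nikodym derivatives. Your extra check that $\var_Q(f)<\infty$ makes explicit a point the paper leaves implicit.
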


\begin{proof}
  Let $\cE_P$ and $\cE_Q$ be the respective Dirichlet forms for $P$  and $Q$, and let $\var_P$ and $\var_Q$ be the respective variance functionals. The absolute continuity conditions imply that the Radon-Nikodym derivatives $\frac{d\mu_P}{d\mu_Q}$ and $\frac{d\mu_Q}{d\mu_P}$ exist. Furthermore, $\mu_P$ and $\mu_Q$ have the same sets of measure zero, and so $\|f\|_{L^{\infty}(\mu_Q)}=\|f\|_{L^{\infty}(\mu_P)}$ for $f:\cX\to \bbC$.  

  For measurable $v:\cX\to \bbC$:
  \begin{align*}
    \cE_{Q}(v,v)&=\frac{1}{2}\int_{\cX}\int_{\cX}\mu_Q(dx)Q(x,dy)|v(x)-v(y)|^2 \\
                &= \frac{1}{2}\int_{\cX}\int_{\cX}\mu_P(dx)\frac{d\mu_{Q}}{d\mu_P}(x)Q(x,dy)|v(x)-v(y)|^2 \\
                &\le \frac{1}{2\epsilon} \left\| \frac{d\mu_{Q}}{d\mu_P}\right\|_{L^{\infty}(\mu_P)}
                  \int_{\cX}\int_{\cX}\mu_P(dx)P(x,dy)|v(x)-v(y)|^2 \\
                &=\frac{1}{\epsilon} \left\| \frac{d\mu_{Q}}{d\mu_P}\right\|_{L^{\infty}(\mu_P)}\cE_P(v,v).
  \end{align*}

  Using that $\mu_P v =\argmin_{y\in \bbC}\bbE_P[|v(\bx)-y|^2]$, where $\bbE_P$ is the expectation with respect to $P$ gives:
  \begin{align*}
    \var_P(v) &= \int_{\cX}\mu_P(dx)|v(x) - \mu_P v|^2\\
              &\le \int_{\cX}\mu_P(dx)|v(x)-\mu_Qv|^2 \\
              &=\int_{\cX}\mu_Q(dx)\frac{d\mu_P}{d\mu_Q}(x) |v(x)-\mu_Qv|^2 \\
              &\le \left\|\frac{d\mu_P}{d\mu_Q} \right\|_{L^{\infty}(\mu_Q)} \int_{\cX}\mu_Q(dx) |v(x)-\mu_Qv|^2 \\
              &= \left\|\frac{d\mu_P}{d\mu_Q} \right\|_{L^{\infty}(\mu_Q)}\var_Q(v).
  \end{align*}
  Combining the bounds shows that if $\var_P(v)>0$, then $\var_Q(v)>0$ and 
  \begin{align*}
    \frac{\cE_P(v,v)}{\var_P(v)}\ge \frac{\epsilon}{ \left\|\frac{d\mu_P}{d\mu_Q} \right\|_{L^{\infty}(\mu_Q)}\left\| \frac{d\mu_{Q}}{d\mu_P}\right\|_{L^{\infty}(\mu_P)} } \frac{\cE_Q(v,v)}{\var_Q(v)}
  \end{align*}

  Minimizing both sides over $v$ gives the result. 
\end{proof}

In many cases, it is easier to work with the sampled chain, $P^m$, than the original chain. We can use the comparision lemma to show that if $P^m$ has a positive spectral gap, then so does $P$.

\begin{lemma}
  \label{lem:iteratedGap}
  Let $P$ be a Markov kernel with invariant probability distribution, $\mu$. Then for all $m\ge 1$:
  $$
  \gap(P)\ge 1-\left(1-\frac{1}{2^m}\gap(P^m)\right)^{1/m}.
  $$
In particular, if $\gap(P^m)>0$ for some $m\ge 1$, then $\gap(P)>0$. 
\end{lemma}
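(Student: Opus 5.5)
The plan is to route everything through the additive reversibilization $P_a=\frac{1}{2}(P+P^\star)$, which is self-adjoint, so that spectral calculus is available, and then use Lemma~\ref{lem:comparison} once to compare $P_a^m$ with $P^m$.

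\textbf{Step 1 (reduce to $P_a$).} By \eqref{eq:dirichletAlt}, $\cE_P(f,f)=\langle f,(I-P_a)f\rangle=\cE_{P_a}(f,f)$, and the variance depends only on $\mu$. Hence $\gap(P)=\gap(P_a)$. Writing $s=\sup\{\langle f,P_af\rangle : f\in L_0^2(\mu),\ \|f\|=1\}$, we have $\gap(P)=1-s$. The subspace $L_0^2(\mu)$ is invariant under the self-adjoint operator $P_a$, because $P_a\ones=\ones$ and $L_0^2(\mu)=\{\ones\}^\perp$. By Theorem 22.A.19 of \cite{douc2018markov}, $s=\sup\spec(P_a|L_0^2(\mu))$, and this spectrum lies in $[-1,1]$. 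I would work with this restriction directly, which avoids needing irreducibility as in Lemma~\ref{lem:gapFromSpectrum}.

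\textbf{Step 2 (comparison).} Expanding $(P+P^\star)^m$ gives a sum of $2^m$ products of Markov kernels, each nonnegative, and one of them is $P^m$. So as kernels,
$$P_a^m(x,A)\ge 2^{-m}P^m(x,A).$$
Both $P_a^m$ and $P^m$ have invariant distribution $\mu$, so both Radon--Nikodym factors in Lemma~\ref{lem:comparison} equal $1$. Applying that lemma with the lemma's $P$ taken to be $P_a^m$, the lemma's $Q$ taken to be $P^m$, and $\epsilon=2^{-m}$ gives
$$\gap(P_a^m)\ge 2^{-m}\gap(P^m).$$

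\textbf{Step 3 (spectral mapping).} Since $P_a^m$ is self-adjoint, its Dirichlet form is $\langle f,(I-P_a^m)f\rangle$. As in Step 1, this gives
$$1-\gap(P_a^m)=\sup\spec(P_a^m|L_0^2(\mu))=\sup\{\lambda^m:\lambda\in\spec(P_a|L_0^2(\mu))\},$$
where the last equality is the spectral mapping theorem for the self-adjoint restriction.
\begin{itemize}
\item If $s\ge 0$, take $\lambda=s$, which lies in the spectrum because the spectrum is closed. This gives $s^m\le 1-\gap(P_a^m)\le 1-2^{-m}\gap(P^m)$. Taking $m$-th roots, which is legitimate since both sides are nonnegative, yields $1-\gap(P)=s\le(1-2^{-m}\gap(P^m))^{1/m}$, which is the claim.
\item If $s<0$, then $\gap(P)>1$. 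The right-hand side of the claim is at most $1$, because $\gap(P^m)\ge 0$ and $1-2^{-m}\gap(P^m)\ge 0$. So the bound holds trivially.
\end{itemize}
For the second inequality in the first case we also need $1-2^{-m}\gap(P^m)\ge 0$. This holds because $\cE_{P^m}(f,f)\le 2\var(f)$, which follows from \eqref{eq:dirichletAlt} applied to $f-\mu f$, so $\gap(P^m)\le 2\le 2^m$. The final assertion follows because $1-(1-c)^{1/m}>0$ whenever $c\in(0,1]$.

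\textbf{Main obstacle.} The delicate step is Step 3: justifying the identity between the Rayleigh-quotient supremum on $L_0^2(\mu)$ and $\sup\lambda^m$ over the restricted spectrum. This needs $L_0^2(\mu)$ to be a reducing subspace for $P_a$, together with the spectral theorem for $P_a|L_0^2(\mu)$. A secondary point is handling the possibly negative part of the spectrum, which the case split on the sign of $s$ takes care of.
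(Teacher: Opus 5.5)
Your proposal is correct and follows the paper's route: you pass to $P_a$, use the kernel domination $P_a^m\ge 2^{-m}P^m$ with Lemma~\ref{lem:comparison} to get $\gap(P_a^m)\ge 2^{-m}\gap(P^m)$, and return from $P_a^m$ to $P_a$ by spectral mapping. Your version of that last step is more careful than the paper's. You work with $\spec(P_a|L_0^2(\mu))$, prove only the needed inequality $s^m\le 1-\gap(P_a^m)$, and split on the sign of $s$. This avoids the irreducibility that the paper's use of Lemma~\ref{lem:gapFromSpectrum} for $P_a^m$ implicitly requires. It also avoids the paper's claimed equality $1-\gap(P_a)=(1-\gap(P_a^m))^{1/m}$, which fails for even $m$ whenever $\spec(P_a|L_0^2(\mu))$ contains a negative point of larger modulus than its supremum.
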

\begin{proof}
  Let $P_a = \frac{1}{2}(P+P^\star)$ be the additive reversibilization of $P$ with respect to $\mu$.  Note that $\mu$ is invariant for $P_a$, $P^m$, and $P_a^m$.

  For all $x\in\cX$ and all $A\in\cB$, we have
  \begin{equation}
    \label{eq:iteratedKernelBound}
  P_a^m(x,A) \ge  \frac{1}{2^m}P^m(x,A).
  \end{equation}

  Now, since $\mu$ is invariant for both $P^m$ and $P_a^m$, we can apply Lemma~\ref{lem:comparison} with Radon-Nikodym derivatives of $1$ to give
  $$
  \gap(P_a^m)\ge \frac{1}{2^m}\gap(P^m).
  $$

    Now we show that $1-\gap(P_a)=(1-\gap(P_a^m))^{1/m}$.

  The spectral mapping theorem shows that $\lambda\in\spec(P_a)$ if and only if $\lambda^m \in \spec(P_a^m)$. See Theorem 10.28 of \cite{rudin1991functional}. Since $P_a^m$ is self-adjoint, we have that
  \begin{align*}
    \gap(P_a^m)=1-\sup\left(\spec(P_a^m)\setminus \{1\}\right)& \implies \sup\left(\spec(P_a^m)\setminus \{1\}\right) = 1-\gap(P_a^m) \\
                                                              &\implies \sup\left(\spec(P_a)\setminus \{1\}\right)= (1-\gap(P_a^m))^{1/m}\\
    &\implies \gap(P_a)=1- (1-\gap(P_a^m))^{1/m}.
  \end{align*}

  The result now follows, since $\gap(P)=\gap(P_a)$. 
  \end{proof}

  Let $P$ be a Markov kernel with invariant probability measure, $\mu$. Let $L^2_0(\mu)$ be the subspace of functions $f\in L^2(\mu)$ such that $\mu f=0$. 
Proposition 22.2.4 of \cite{douc2018markov} shows that $\absgap(P)>0$ if and only if there is an integer, $m\ge 1$, such that $\|P^m\|_{L^2_0(\mu)}<1$. Furthermore, if $\absgap(P)>0$, then $P^m$ converges to $\ones \otimes \mu$ exponentially as an $L^2(\mu)$ operator, which implies that $\mu$ must be the unique invariant probability measure.

The next lemma shows that if $\absgap(P)>0$, then $\gap(P)>0$. 

\begin{lemma}
  \label{lem:absGap2Gap}
  Let $P$ be a Markov kernel over $(\cX,\cB)$ with invariant probability measure, $\mu$. If $\absgap(P)>0$, then there is a number $m\ge 1$ such that $\|P^m\|_{L^2_0(\mu)}<1$ and
  $$
  \gap(P)\ge 1-\left(1-2^{-m}(1-\|P^m\|_{L_0^2(\mu)}) \right)^{1/m}>0.
  $$
\end{lemma}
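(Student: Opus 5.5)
The plan is to combine Proposition 22.2.4 of \cite{douc2018markov} with Lemma~\ref{lem:iteratedGap}. Since $\absgap(P)>0$, that proposition gives an integer $m\ge 1$ with $\rho:=\|P^m\|_{L^2_0(\mu)}<1$. By Lemma~\ref{lem:iteratedGap}, $\gap(P)\ge 1-\left(1-2^{-m}\gap(P^m)\right)^{1/m}$. The right-hand side is increasing in $\gap(P^m)$, so it suffices to show $\gap(P^m)\ge 1-\rho$.

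To prove $\gap(P^m)\ge 1-\rho$, fix $f\in L^2(\mu)$ with $\var(f)>0$. Both the Dirichlet form of $P^m$ and the variance are unchanged when a constant is added to $f$: the Dirichlet form because $P^m\ones=\ones$, via \eqref{eq:dirichletAlt}. We may therefore assume $f\in L_0^2(\mu)$, so that $\var(f)=\|f\|^2$.

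Next I use that $P^m$ maps $L_0^2(\mu)$ into itself, since $\mu P^m f=\mu f=0$ by invariance. Cauchy--Schwarz then gives
\begin{equation*}
\Re\langle f,P^m f\rangle\le |\langle f,P^m f\rangle|\le \|f\|\,\|P^m f\|\le \rho\|f\|^2 .
\end{equation*}
Consequently $\cE_{P^m}(f,f)=\|f\|^2-\Re\langle f,P^mf\rangle\ge(1-\rho)\var(f)$. Dividing by $\var(f)$ and taking the infimum over $f$ yields $\gap(P^m)\ge 1-\rho$.

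Substituting this into Lemma~\ref{lem:iteratedGap} gives exactly the displayed bound. Positivity follows because $0<2^{-m}(1-\rho)\le 1$: the quantity inside the $m$-th root lies in $[0,1)$, its $m$-th root is strictly less than $1$, and so the lower bound is strictly positive.

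There is no serious obstacle. The only points needing care are the reduction to mean-zero $f$ (invariance of the Dirichlet form under adding constants) and the fact that $P^m$ preserves $L_0^2(\mu)$, which is what makes the operator norm on $L_0^2(\mu)$ the relevant quantity.
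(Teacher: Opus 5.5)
Your proposal is correct and follows essentially the same route as the paper: obtain $m$ from Proposition 22.2.4 of \cite{douc2018markov}, then prove $\mathrm{gap}(P^m)\ge 1-\|P^m\|_{L_0^2(\mu)}$ by Cauchy--Schwarz on mean-zero functions, extended to all $f$ by subtracting a constant, and finally invoke Lemma~\ref{lem:iteratedGap}. You also make explicit several details the paper leaves implicit: that the bound is monotone in $\mathrm{gap}(P^m)$, that $P^m$ preserves $L_0^2(\mu)$, and why the final bound is strictly positive.
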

\begin{proof}
 
  Since $\absgap(P)>0$, there is an integer $m\ge 1$ such that $\|P^m\|_{L^2_0(\mu)}<1$.
  Based on Lemma~\ref{lem:iteratedGap}, it suffices to show that $\gap(P^m)\ge 1-\|P^m\|_{L^2_0(\mu)}$. 


  To get a lower-bound on $\gap(P^m)$, let $\cE_{P^m}$ and $\var_{P^m}$ denote the respective Dirichlet form and variance functional for $P^m$. 

  For $v\in L_0^2(\mu)$, we have that $\|v\|_{L^2(\mu)}^2=\var_{P^m}(v)$. So:
  \begin{align*}
  \Re\left(\langle v,P^m v\rangle \right)&= \Re\left(\langle v,P^m v\rangle \right) \\
  &=\Re\left(\langle v,[(P^m-I)+I]v\rangle \right) \\
                                         &=-\cE_{P^m}(v,v)+\var_{P^m}(v) \\
                                         &\le \|P^m\|_{L_0^2(\mu)} \var_{P^m}(v),
  \end{align*}
  where the final inequality uses the Cauchy-Schwartz inequality and the definition of the operator norm.

  Re-arranging shows that when $\var_{P^m}(v)>0$ and $v\in L_0^2(\mu)$, 
  \begin{equation}
    \label{eq:iteratedRatio}
  \frac{\cE_{P^m}(v,v)}{\var_{P^m}(v)}\ge 1- \|P^m\|_{L_0^2(\mu)} .
  \end{equation}

  The bound in (\ref{eq:iteratedRatio}) holds for all $v\in L^2(\mu)$ with $\var_{P^m}(v)>0$, since
  $$
\frac{\cE_{P^m}(v,v)}{\var_{P^m}(v)}=\frac{\cE_{P^m}(v-(\mu v)\ones,v-(\mu v)\ones)}{\var_{P^m}(v-(\mu v)\ones )}.
  $$
  It follows that $\gap(P^m)\ge  1- \|P^m\|_{L_0^2(\mu)}$.
\end{proof}
 
The next basic result shows that IID kernels have gap $1$:
\begin{lemma}
  \label{lem:IIDGap}
  If $P(x,A)=\mu(A)$ for all $x\in\cX$ and $A\in\cB$, then $\gap(P)=1$. 
\end{lemma}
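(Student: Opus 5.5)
We prove Lemma~\ref{lem:IIDGap} by computing the Dirichlet form of the IID kernel directly and showing that it coincides with the variance. Then the ratio in \eqref{eq:gap} equals $1$ for every admissible $f$.

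First note that $\mu$ is invariant for $P$: $\int\mu(dx)P(x,A)=\mu(A)$. So the Dirichlet form and variance in \eqref{eq:gap} are both taken with respect to $\mu$. Now take $f\in L^2(\mu)$ and use the representation \eqref{eq:dirichletAlt}:
$$
\cE(f,f)=\frac{1}{2}\int_{\cX}\int_{\cX}\mu(dx)\mu(dy)|f(x)-f(y)|^2 .
$$
Expand $|f(x)-f(y)|^2=|f(x)|^2+|f(y)|^2-2\Re\left(\bar f(x)f(y)\right)$ and integrate against the product measure $\mu\otimes\mu$; Fubini is justified since $f\in L^2(\mu)\subset L^1(\mu)$ on a probability space. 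This gives
$$
\cE(f,f)=\frac{1}{2}\left(2\int_{\cX}\mu(dx)|f(x)|^2-2\Re\left(\overline{\mu f}\,\mu f\right)\right)=\int_{\cX}\mu(dx)|f(x)|^2-|\mu f|^2=\var(f).
$$

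Hence $\cE(f,f)/\var(f)=1$ whenever $0<\var(f)<\infty$, and the infimum in \eqref{eq:gap} equals $1$. The infimum is over a nonempty set as long as $\mu$ is not a point mass, that is, as long as nonconstant $L^2(\mu)$ functions exist. If every $f$ has zero variance, the infimum is vacuous; we would note this degenerate case as excluded, or treated by convention.

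As a cross-check, one could use the operator picture: $P=\ones\otimes\mu$ is self-adjoint, so $P_a=P$ acts as the orthogonal projection onto constants. Thus $\spec(P_a)=\{0,1\}$, and Lemma~\ref{lem:gapFromSpectrum} gives gap $1-0=1$. There is no real obstacle here; the only point of care is the expansion of the cross term with complex-valued $f$, which the real part handles.
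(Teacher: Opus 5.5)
Your proof is correct and takes the same approach as the paper, which likewise substitutes $P(x,dy)=\mu(dy)$ into \eqref{eq:dirichletAlt} to get $\cE(f,f)=\var(f)$ for all $f\in L^2(\mu)$, and hence $\gap(P)=1$. Your expansion of the complex cross term, the Fubini justification, and your remark about the degenerate point-mass case simply spell out what the paper's one-line proof leaves implicit.
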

\begin{proof}
Using (\ref{eq:dirichletAlt}) shows that $\cE(f,f)=\var(f)$ for all $f\in L^2(\mu)$, so that $\gap(P)=1$. 
\end{proof}

A Markov kernel, $P$ is called \emph{uniformly geometrically ergodic} if it has an invariant probability distribution, $\mu$ and there are constants $c\ge 0$, $\rho \in [0,1)$ such that for all $x\in\cX$ and all $n\ge 0$:
$$
\|P^n(x,\cdot)-\mu\|_{\TV}\le c\rho^n. 
$$

The Markov kernel, $P$ is uniformly geometrically ergodic if and only if there is a number $\delta>0$ and an integer $m\ge 1$ such that:
\begin{equation}
  \label{eq:doeblin}
P^m(x,A)\ge \delta \mu(A) \quad \forall x\in\cX, \forall A\in\cB.
\end{equation}
See Theorem 15.3.1 of \cite{douc2018markov}.

The next result shows that if $P$ is uniformly geometrically ergodic, then $\gap(P)>0$.

\begin{proposition}
  \label{prop:doeblinGap}
  Let $P$ and $\mu$ satisfy (\ref{eq:doeblin}) for an integer $m\ge 1$ and number $\delta > 0$. Then
  $$
  \gap(P)\ge 1-\left(1-\left(\frac{\delta}{2}\right)^m \right)^{1/m} >0. 
  $$
\end{proposition}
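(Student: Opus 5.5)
The plan is to combine the comparison lemma with the IID gap and then pass back from $P^m$ to $P$ via Lemma~\ref{lem:iteratedGap}. Let $Q$ be the IID kernel $Q(x,A)=\mu(A)$, which has invariant distribution $\mu$ and, by Lemma~\ref{lem:IIDGap}, $\gap(Q)=1$. The Doeblin condition (\ref{eq:doeblin}) says exactly that $P^m(x,A)\ge \delta Q(x,A)$ for all $x\in\cX$ and $A\in\cB$. Moreover, $\mu$ is invariant for $P^m$.

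First, apply Lemma~\ref{lem:comparison} with the kernel $P^m$ in the role of $P$ and $Q$ as above. Both invariant measures equal $\mu$, so both Radon--Nikodym derivatives are identically $1$ and their $L^\infty$ norms are $1$. The lemma then gives $\gap(P^m)\ge \delta\,\gap(Q)=\delta$.

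Second, apply Lemma~\ref{lem:iteratedGap}:
$$
\gap(P)\ge 1-\left(1-2^{-m}\gap(P^m)\right)^{1/m}\ge 1-\left(1-2^{-m}\delta\right)^{1/m},
$$
using that $t\mapsto 1-(1-2^{-m}t)^{1/m}$ is nondecreasing on $[0,2^m]$.

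Finally, compare with the claimed bound. Since $\delta\le 1$ (take $A=\cX$ in (\ref{eq:doeblin})) and $m\ge 1$, we have $\delta^m\le\delta$, so $(\delta/2)^m=2^{-m}\delta^m\le 2^{-m}\delta$. Monotonicity then gives the stated inequality $\gap(P)\ge 1-(1-(\delta/2)^m)^{1/m}$. Positivity follows because $0<(\delta/2)^m<1$, so $1-(\delta/2)^m\in(0,1)$ and its $m$-th root is strictly less than $1$.

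There is no real obstacle. The only points to check are that Lemma~\ref{lem:comparison} applies to $P^m$ rather than $P$, which it does since it is stated for arbitrary Markov kernels, and that the final monotonicity step is in the right direction. The stated bound is weaker than what the argument yields, which suggests the authors may have instead compared $P_a^m\ge 2^{-m}P^m\ge (\delta/2)^m$-type bounds, but either route suffices.
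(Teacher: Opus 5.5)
Your proof is correct and follows the paper's argument: compare $P^m$ with the IID kernel $Q(x,A)=\mu(A)$ via Lemma~\ref{lem:comparison} to get $\mathrm{gap}(P^m)\ge\delta$, then pass to $P$ with Lemma~\ref{lem:iteratedGap}. Your observation that $\delta\le 1$ implies $(\delta/2)^m\le 2^{-m}\delta$ supplies a step the paper leaves implicit. The paper's argument actually yields the stronger bound $1-(1-2^{-m}\delta)^{1/m}$ by this same route, so your closing guess that the authors used a different argument is not needed.
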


\begin{proof}
  Let $Q$ be the Markov kernel corresponding to independent, identically distributed samples from $\mu$. Here $Q(x,A)=\mu(A)$ for all $x\in\cX$ and all $A\in\cB$. Lemma~\ref{lem:IIDGap} implies that $\gap(Q)=1$. 

  Since $Q$ and $P^m$ both have invariant probability distribution $\mu$, Lemma~\ref{lem:comparison} implies that
  $$
  \gap(P^m) \ge \delta \gap(Q) = \delta. 
  $$
  The result now follows from Lemma~\ref{lem:iteratedGap}. 
\end{proof}

In some cases, (\ref{eq:doeblin}) can be hard to verify, especially when the form of $\mu$ is not known. The next result shows that a bound on the spectral gap can be computed as long as we can compute upper and lower bounds on the kernel in terms of another probability distribution, $\nu$. 

\begin{lemma}
  \label{lem:sandwich}
  Let $P$ be a Markov kernel and  let $\nu$ be a probability measure. If there are positive numbers $\gamma$ and $\delta$ and an integer $m\ge 1$ such that
  \begin{equation}
    \label{eq:kernelSandwich}
  \gamma \nu(A) \ge P^m(x,A)\ge \delta \nu(A) \quad \forall x\in\cX,\forall A\in\cB,
  \end{equation}
  then
  \begin{align*}
    \gap(P^m)&\ge \frac{\delta^2}{\gamma}\\
    \gap(P)&\ge 1-\left(1-2^{-m}\frac{\delta^2}{\gamma}\right)^{1/m}. 
  \end{align*}
\end{lemma}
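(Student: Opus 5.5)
The plan is to reduce to the comparison Lemma~\ref{lem:comparison} with $Q$ the IID kernel $Q(x,A)=\nu(A)$, which has invariant distribution $\nu$ and $\gap(Q)=1$ by Lemma~\ref{lem:IIDGap}. Afterwards, Lemma~\ref{lem:iteratedGap} transfers the bound from $P^m$ to $P$.

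First we need an invariant probability measure $\mu$ for $P$ together with control on $d\mu/d\nu$. Note that the paper assumes $P$ has invariant $\mu$ throughout. If not, the lower bound in \eqref{eq:kernelSandwich} is a Doeblin condition for $P^m$, which gives one via Theorem 15.3.1 of \cite{douc2018markov}. Since $\mu$ is invariant for $P^m$,
$$
\mu(A)=\int_{\cX}\mu(dx)P^m(x,A),
$$
and integrating \eqref{eq:kernelSandwich} against $\mu$ gives $\gamma\nu(A)\ge\mu(A)\ge\delta\nu(A)$ for all $A\in\cB$. Hence $\mu\ll\nu$ and $\nu\ll\mu$, with
$$
\left\|\frac{d\mu}{d\nu}\right\|_{L^\infty(\nu)}\le\gamma,\qquad \left\|\frac{d\nu}{d\mu}\right\|_{L^\infty(\mu)}\le \frac{1}{\delta}.
$$

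Next, apply Lemma~\ref{lem:comparison} with $P^m$ in place of $P$, $\mu_P=\mu$, $Q$ as above with $\mu_Q=\nu$, and $\epsilon=\delta$. The kernel condition $P^m(x,A)\ge\delta Q(x,A)$ is exactly the lower half of \eqref{eq:kernelSandwich}. This yields
$$
\gap(P^m)\ge \frac{\delta\cdot 1}{\gamma\cdot \delta^{-1}}=\frac{\delta^2}{\gamma}.
$$

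Finally, Lemma~\ref{lem:iteratedGap} gives $\gap(P)\ge 1-(1-2^{-m}\gap(P^m))^{1/m}$. Since this right-hand side is increasing in $\gap(P^m)$, substituting $\delta^2/\gamma$ gives the second bound. For this to make sense we need $2^{-m}\delta^2/\gamma\le 1$. This holds because taking $A=\cX$ in \eqref{eq:kernelSandwich} gives $\delta\le 1\le\gamma$.

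The main obstacle is the first step: getting the two-sided Radon–Nikodym bounds, and making sure an invariant $\mu$ exists so that Lemma~\ref{lem:comparison} applies. The rest is direct substitution.
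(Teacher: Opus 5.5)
Your proposal is correct and follows the paper's proof essentially step for step: the IID comparison kernel $Q(x,A)=\nu(A)$ with $\mathrm{gap}(Q)=1$, then the two-sided Radon--Nikodym bounds obtained by integrating the sandwich inequality against $\mu$, then Lemma~\ref{lem:comparison} followed by Lemma~\ref{lem:iteratedGap}. Your side remarks make explicit what the paper leaves implicit: an invariant $\mu$ exists by the Doeblin lower bound (this is what the paper's unfinished sentence about the state space being small is getting at), and $\delta\le 1\le\gamma$, so the final expression is well defined.
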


\begin{proof}
  Let $Q(x,A)=\nu(A)$ for all $x\in\cX$ and all $A\in\cB$. Lemma~\ref{lem:IIDGap} implies that $\gap(Q)=1$.

  The right inequality of (\ref{eq:kernelSandwich}) implies that the entire state space is  a small set, and so 

  Now, since $\mu$ is invariant for $P$, we have for all $A\in\cB$:
  $$
  \gamma \mu Q(A)=\gamma \nu(A)\ge \mu P^m(A)=\mu(A) \ge \delta \mu Q(A)= \delta \nu(A). 
  $$
  It follows that
  \begin{align*}
    \left\| \frac{d\mu}{d\nu}\right\|_{L^{\infty}(\nu)}&\le \gamma \\
    \left\|\frac{d\nu}{d\mu}\right\|_{L^{\infty}(\mu)}&\le \delta^{-1}
  \end{align*}
  The result now follows from Lemmas~\ref{lem:comparison} and \ref{lem:iteratedGap}.
\end{proof}

  \subsection{Linear Gaussian Systems}
  \label{ss:linGauss}
  Now we use Lemma~\ref{lem:iteratedGap} to bound the spectral gaps of stable linear Gaussian systems. Some challenges that arise are: the state space is continuous and unbounded, the kernels are typically non-reversible, and the corresponding Markov chain is not uniformly geometrically ergodic. 

  Let $P$ be the Markov operator corresponding to the linear Gaussian  of the form:
  \begin{equation}
    \label{eq:linGauss}
  \bx_{k+1}=F\bx_k+G\bv_k
\end{equation}
  where $\bx_k\in\bbR^n$ and  and $\bv_k$ are independent, identically distributed Gaussian vectors with mean $0$ and covariance $I$.

The main result of this section, Proposition~\ref{prop:linGaussSpec}, characterizes the non-zero spectrum of $P$, gives an exact formula for the absolute spectral gap, and gives a positive lower bound on the spectral gap.

  Let $\prec$ and $\preceq$ denote the positive definite and positive semi-definite partial orders, respectively. Let $\|\cdot\|_2$ denote the spectral norm for matrices. 
  The following elementary lemma is required for the main results of the subsection. 

  \begin{lemma}
    \label{lem:elementaryLinGauss}
  Say that the system $(F,G)$ is controllable and all eigenvalues of $F$ have magnitude less than $1$. 
  \begin{enumerate}
      \item \label{it:linGaussIrreducible} $P$ is irreducible.
    \item \label{it:lyap} The unique invariant distribution for $P$, $\mu$, is a Gaussian with mean $0$ and covariance $S$, where $S\succ 0$ solves the linear Lyapunov equation:
\begin{equation}
  \label{eq:linLyap}
  S=GG^\top +FSF^\top.
\end{equation}
\item \label{it:posDefCov}
  For all integers $m$ sufficiently large (in particular, for $m\ge n$) the matrix
  $$
  W = \sum_{i=0}^{m-1}F^i G G^\top (F^i)^\top
  $$
  is positive definite.
\item \label{it:altLyap} For $m$ as above, let $A=F^m$. Then $S$ also satisfies the Lyapunov equation:
  $$
  S = W+ASA^\top.
  $$

\item \label{it:contractiveMat}  If $\tilde A =   S^{-\frac{1}{2}}F^{m}S^{\frac{1}{2}}$ and $W$ is positive definite, then $\|\tilde A\|_2 <1$.
  \end{enumerate}
\end{lemma}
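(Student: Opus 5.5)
I will prove the five items in a slightly different order: first the Lyapunov facts (items~\ref{it:lyap}, \ref{it:posDefCov}, \ref{it:altLyap}), then contraction (item~\ref{it:contractiveMat}), and finally irreducibility (item~\ref{it:linGaussIrreducible}), which is easiest once $W\succ 0$ is available.

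\textbf{Covariance and invariant distribution.} Iterating \eqref{eq:linGauss} gives
$$
\bx_m=F^m\bx_0+\sum_{i=0}^{m-1}F^iG\bv_{m-1-i}.
$$
Conditional on $\bx_0=x$, this is Gaussian with mean $F^mx$ and covariance $W$.

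For item~\ref{it:posDefCov}, note that $W\succeq \sum_{i=0}^{n-1}F^iGG^\top(F^i)^\top$ when $m\ge n$. The right-hand side is the controllability Gramian $\mathcal{C}\mathcal{C}^\top$ with $\mathcal{C}=[G\ FG\ \cdots\ F^{n-1}G]$. Controllability means $\mathcal{C}$ has full row rank, so $W\succ 0$.

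For item~\ref{it:lyap}, the spectral radius of $F$ is less than $1$, so $\sum_{i\ge 0}F^iGG^\top(F^i)^\top$ converges. Call the limit $S$. It solves \eqref{eq:linLyap}, and $S\succeq W\succ 0$. Uniqueness of the solution holds because the map $X\mapsto X-FXF^\top$ is invertible when $\rho(F)<1$. The Gaussian $N(0,S)$ is invariant because a direct computation shows $F\bx+G\bv\sim N(0,FSF^\top+GG^\top)=N(0,S)$ when $\bx\sim N(0,S)$. For uniqueness of the invariant distribution, the law of $\bx_k$ from any initial distribution converges weakly to $N(0,S)$, since $F^k\bx_0\to 0$ and the noise covariances converge to $S$. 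Hence any invariant distribution equals $N(0,S)$.

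For item~\ref{it:altLyap}, split the series at index $m$:
$$
S=W+\sum_{i\ge m}F^iGG^\top(F^i)^\top=W+F^mSF^{m\top}.
$$

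\textbf{Contraction.} For item~\ref{it:contractiveMat}, conjugate the identity in item~\ref{it:altLyap} by $S^{-1/2}$. This gives
$$
I=S^{-1/2}WS^{-1/2}+\tilde A\tilde A^\top.
$$
Since $W\succ 0$, we have $\tilde A\tilde A^\top=I-S^{-1/2}WS^{-1/2}\prec I$. Hence $\|\tilde A\|_2^2=\lambda_{\max}(\tilde A\tilde A^\top)<1$.

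\textbf{Irreducibility.} For item~\ref{it:linGaussIrreducible}, take $m\ge n$. Then $P^m(x,\cdot)=N(F^mx,W)$ with $W\succ 0$, so it has a density that is strictly positive everywhere. Take $C$ to be the closed unit ball. It is accessible from every $x$ because $P^m(x,C)>0$.

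It is also small. For $x\in C$, the mean $F^mx$ ranges over a bounded set. The Gaussian densities $N(F^mx,W)$ are therefore bounded below, uniformly in $x\in C$, by $c\,\mathbb 1_C(y)$ for some $c>0$. This gives $P^m(x,\cdot)\ge c\,\mathrm{Leb}(\cdot\cap C)$, which is a nonzero minorizing measure.

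I expect the main care to be needed in the uniqueness of the invariant measure and the minorization. Both are routine, but they need a compactness argument for the continuity of the Gaussian density lower bound.
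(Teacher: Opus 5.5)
Your proposal is correct and follows the paper's argument for items 3--5: the controllability Gramian bound $W \succeq \mathcal{C}\mathcal{C}^\top \succ 0$, splitting the series for $S$ at index $m$, and conjugating $S = W + ASA^\top$ by $S^{-1/2}$ to get $\tilde A\tilde A^\top = I - S^{-1/2}WS^{-1/2} \prec I$. The only difference is that you prove items 1 and 2 directly, whereas the paper cites them as standard facts from Meyn and Tweedie; for item 1 you minorize $N(F^m x, W)$ on the unit ball to get an accessible small set, and for item 2 you use weak convergence of the law of $\bx_k$ to show uniqueness of the invariant law.
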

\begin{proof}
  Items \ref{it:linGaussIrreducible} and \ref{it:lyap} are standard facts for linear Gaussian systems. See \cite{meyn2012markov}.

  For item~\ref{it:posDefCov}, controllability implies that the matrix
  $$
  \cC = \begin{bmatrix}G & FG & \cdots & F^{n-1}G\end{bmatrix}
  $$
  has rank $n$. Thus
  $$
  \cC\cC^\top = \sum_{i=0}^{n-1}F^i GG^\top (F^i)^\top \succ 0.
  $$
  Thus, for all sufficiently large $m\ge 0$, $W\succ 0$. 

  Item~\ref{it:altLyap} follows because for all $m\ge 0$:
  \begin{align*}
    S &= \sum_{i=0}^\infty F^i G G^\top (F^i)^\top \\
  &= \sum_{i=0}^{m-1}F^i G G^\top (F^i)^\top + F^m \left(
\sum_{i=0}^\infty F^i G G^\top (F^i)^\top 
  \right)(F^m)^\top. 
  \end{align*}

  Finally, we prove item~\ref{it:contractiveMat}.   Let $\tilde A=S^{-\frac{1}{2}}AS^{\frac{1}{2}}$  and $\tilde W = S^{-\frac{1}{2}}WS^{-\frac{1}{2}}$. Multiplying both sides of the Lyapunov equation by $S^{-\frac{1}{2}}$ gives
  $$
  I=\tilde W+\tilde A \tilde A^\top \implies
\left\|S^{-\frac{1}{2}}F^mS^{\frac{1}{2}}\right\|_2=
  \|\tilde A\|_2 =\sqrt{\lambda_{\max}(I-\tilde W)}<1,
  $$
  where $\|\tilde A\|_2$ denotes the spectral norm of $\tilde A$. 
\end{proof}

If $\cS$ is a closed subspace of $L^2(\mu)$, let $P|_{\cS}$ denote the restriction of $P$ to $\cS$ and let $\Pi_{\cS}$ denote the orthogonal projection onto $\cS$. 

For an integer $d\ge 1$, 
let $F^{\otimes d}=F\otimes \cdots \otimes F$, where the Kronecker product is repeated $d$ times. In the special case of $d=0$, we set $F^{\otimes 0}=1$.

The following lemma describes the eigenvalues of $P$ when restricted to spaces of polynomials. Note that every polynomial is in $L^2(\mu)$, since all moments of a Gaussian are bounded.  

\begin{lemma}
  \label{lem:polyRestrictions}
  Let $d$ be a non-negative integer and let $\cP_d$ be the space of polynomials of degree at most $d$. 
  \begin{enumerate}
    \item \label{it:invariance} $\cP_d$ is invariant under $P$.
    \item \label{it:eigenvalueStructure} If $\gamma$ is an eigenvalue of the finite-dimensional operator $\Pi_{\cP_d} P|_{\cP_d}$, then $\gamma$ is an eigenvalue of $F^{\otimes k}$ for some $k\in\{0,\ldots,d\}$. In particular, if $k\ge 1$ and $\lambda_1,\ldots,\lambda_n$ are the eigenvalues of $F$, then $\gamma = \prod_{j=1}^k\lambda_{i_j}$, for some numbers $i_j\in \{1,\ldots,n\}$. 
    \item \label{it:eigenvectorDegreeRelation} If $\gamma$ is an eigenvalue of $P$ with an eigenvector that is a degree $d$ polynomial, then $\gamma$ is an eigenvalue of $F^{\otimes d}$.
  \end{enumerate} 
\end{lemma}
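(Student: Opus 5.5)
Item~\ref{it:invariance} comes from expanding $Pf(x)=\bbE[f(Fx+G\bv)]$ for a monomial $f$ of degree at most $d$. Each monomial $\prod_j x_{a_j}$ becomes $\prod_j\left((Fx)_{a_j}+(G\bv)_{a_j}\right)$. Expanding the product and taking expectations over $\bv$ replaces products of components of $G\bv$ by Gaussian moments, which are constants. What remains is a polynomial in $x$ of degree at most $d$. Hence $P\cP_d\subseteq\cP_d$. In particular $\Pi_{\cP_d}P|_{\cP_d}=P|_{\cP_d}$, so item~\ref{it:eigenvalueStructure} is really a statement about the eigenvalues of the finite-dimensional map $P|_{\cP_d}$.

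For item~\ref{it:eigenvalueStructure}, I will use the grading by degree. Write each $f\in\cP_k$ as $f(x)=c_k^\top x^{\otimes k}+(\text{terms of degree}<k)$, with coefficient vector $c_k\in\bbC^{n^k}$ (non-unique, but we may work on the symmetric tensors or simply on the quotient). The expansion above shows that the top-degree part of $Pf$ is $c_k^\top (Fx)^{\otimes k}=c_k^\top F^{\otimes k}x^{\otimes k}$; the noise contributes only lower-degree terms. So the flag $\cP_0\subset\cP_1\subset\cdots\subset\cP_d$ is $P$-invariant. On each quotient $\cP_k/\cP_{k-1}$, $P$ acts as the map $c\mapsto (F^{\otimes k})^\top c$ restricted to symmetric tensors, which form an invariant subspace. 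Choosing a basis adapted to the flag makes $P|_{\cP_d}$ block upper triangular, with these induced maps as diagonal blocks. Hence every eigenvalue is an eigenvalue of some diagonal block, and thus an eigenvalue of $(F^{\otimes k})^\top$ restricted to an invariant subspace. That is an eigenvalue of $F^{\otimes k}$, for some $k\le d$. The product formula $\prod_{j}\lambda_{i_j}$ then follows from the standard fact that the eigenvalues of a Kronecker product are products of the factors' eigenvalues (e.g.\ by triangularizing $F=UTU^{-1}$, so $F^{\otimes k}=U^{\otimes k}T^{\otimes k}(U^{\otimes k})^{-1}$ with $T^{\otimes k}$ triangular).

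For item~\ref{it:eigenvectorDegreeRelation}, let $f$ be an eigenvector of degree exactly $d$ with nonzero top coefficient. Comparing top-degree parts in $Pf=\gamma f$ gives $c_d^\top F^{\otimes d}x^{\otimes d}=\gamma c_d^\top x^{\otimes d}$ as homogeneous polynomials. I take $c_d$ to be the symmetric representative. Since $F^{\otimes d}$ commutes with the permutations of tensor factors, $(F^{\otimes d})^\top c_d$ is also symmetric. Uniqueness of symmetric coefficient tensors then gives $(F^{\otimes d})^\top c_d=\gamma c_d$ with $c_d\neq0$. So $\gamma$ is an eigenvalue of $F^{\otimes d}$.

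The main obstacle is the bookkeeping of non-unique coefficient tensors. I will handle it by restricting throughout to symmetric tensors, which are invariant under $(F^{\otimes k})^\top$, so eigenvalues on that subspace are eigenvalues of $F^{\otimes k}$.
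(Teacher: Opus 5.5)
Your proposal is correct and follows essentially the same route as the paper. Invariance of $\mathcal{P}_d$ comes from expanding $\mathbb{E}[f(Fx+G\mathbf{v})]$. A degree-by-degree block-triangular structure has $k$-th diagonal block $p\mapsto p(F\,\cdot)$, which is identified with $F^{\otimes k}$ acting on coefficient tensors, and eigenvalues are inherited from there. The one difference is minor: the paper takes the quotient of all rank-$k$ tensors by the subspace $\mathcal{U}$ of tensors vanishing on the diagonal, whereas you restrict to the invariant subspace of symmetric tensors, which is exactly an invariant complement of $\mathcal{U}$. Your explicit top-degree comparison for the third item spells out what the paper states in one line.
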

\begin{proof}
  Item~\ref{it:invariance}:
  Let $f$ be a polynomial of degree at most $d$. We can always uniquely decompose $f$ as $f=\sum_{i=0}^{d}f_k$, where $f_k$ are homogeneous polynomials of degree $k$. Note that if $f$ has degree less than $d$, the decomposition still holds but with $f_k=0$ for $k$ greater than the degree of $f$.

  By linearity, $Pf=\sum_{k=0}^d Pf_k$. So, to show invariance, it suffices to check that if $p$ is a homogeneous polynomial of degree $k$, then $Pp$ is a polynomial of degree at most $k$. 

  Let $p$ be a homogeneous polynomial of degree $k$ and let $\bv$ be a Gaussian random variable with mean $0$ and covariance $I$. Then, for all $x\in\bbR^n$ we can express:
  \begin{equation}
  \label{eq:homInvariant}
  Pp(x)=\bbE[p(Fx+G\bv)]=p(Fx)+q(x).
\end{equation}
Here $p(Fx)$ is homogeneous of degree $k$, and $q(x)$ has degree at most $k-1$, and is formed by taking expected values of cross terms between $Fx$ and $G\bv$. In the special case of $k=0$, \eqref{eq:homInvariant} still holds, since $p(x)=c$ for some constant $c$. Then \eqref{eq:homInvariant} reduces to $Pc=c$, with $q=0$. Thus, $\cP_d$ is invariant under $P$ for all $d\ge 0$.

  Item \ref{it:eigenvalueStructure}: For $f\in\cP_d$, let $g=Pf$. Decompose $f=\sum_{k=0}^d f_k$ and $g=\sum_{k=0}^{d}g_k$ into sums of homogeneous polynomials. Since $\cP_k$ is invariant under $P$ for all $k$, $\Pi_{\cP_d}P|_{\cP_d}$ can be expressed in block-triangular form:
  \begin{equation}
  \label{eq:triangularPolyMap}
  \begin{bmatrix}
  g_0 \\
  g_1 \\
  \vdots \\
  g_d
  \end{bmatrix}=\begin{bmatrix}
  P_{00} & P_{01} & \cdots  & P_{0d}\\
  & P_{11} & \cdots & P_{1d}\\
 & & \ddots &\vdots \\
         &&  & P_{dd}
  \end{bmatrix}\begin{bmatrix}
  f_{0}\\
  f_1\\
  \vdots\\
  f_d
\end{bmatrix}.
  \end{equation}

  The block triangular structure implies that if $\gamma$ is an eigenvalue of $\Pi_{\cP_d}P|_{\cP_d}$, then $\gamma$ must be an eigenvalue of $P_{kk}$ for some $k\in\{0,\ldots,d\}$. For $k=0$, irreducibility of $P$ implies that $P_{00}=1$, and so the only  eigenvalue is $1$, which is precisely the eigenvalue of $F^{\otimes 0}$. 

  For $k\ge 1$, and $p$ homogeneous of degree $k$,  $P_{kk}p$ is the homogeneous part of degree $k$ of $Pp$. By \eqref{eq:homInvariant}, $P_{kk}p(x)=p(Fx)$. It now remains to prove that every eigenvalue of $P_{kk}$ is an eigenvalue of $F^{\otimes k}$.

  In what follows, we will show that $P_{kk}$ can be expressed as a quotient of a mapping between tensors, and we will derive the eigenvalues of $P_{kk}$ from the eigenvalues of the tensor mapping. To this end, we will follow common indexing conventions for tensor calculations. The components of $x\in \bbR^n$ are denoted by $x^i$. The components of the matrix $F$ are denoted as $F_i^j$ so that 
  $$
  (Fx)^j = \sum_{i=1}^n F_i^j x^i.
  $$

If $p$ is any homogeneous polynomial of degree $k$, there are coefficients $\alpha_{i_1,\ldots,i_k}$ such that 
$$
p(x)=
\sum_{i_1,i_2,\ldots,i_k=1}^{n}\alpha_{i_1,i_2,\ldots,i_k} x^{i_1} x^{i_2}\cdots x^{i_k}
$$

Following \cite{lee2013introduction}, let $T^k((\bbC^n)^\star)$ denote the space of $\mathrm{rank}-k$ covariant tensors. The numbers $\alpha_{i_1,\ldots,i_k}$ can be identified with the coefficients of a $\mathrm{rank}-k$ covariant tensor, $\alpha:(\bbC^n)^{k}\to \bbC$ defined by:
$$
\alpha(v_1,\ldots,v_k)=\sum_{i_1,\ldots,i_k=1}^{n} \alpha_{i_1,\ldots,i_k}v_1^{i_1}\cdots v_k^{i_k}.
$$

  Note that the polynomial $p$ can be expressed as $p(x)=\alpha(x,\ldots,x)$, but the representation via tensors is typically not unique. For example, if $n=2$ and $k=2$, we have
  \begin{align*}
    \alpha(x,x)&=\alpha_{11}x^1 x^1 +\alpha_{12} x^1 x^2 + \alpha_{21} x^2 x^1 +\alpha_{22} x^2x^2 \\
               &= \alpha_{11}(x^1)^2+(\alpha_{12}+\alpha_{21})x^1x^2+\alpha_{22}(x^2)^2.
  \end{align*}
  So, if $u\in T^{2}((\bbC^2)^\star)$ with $u^{11}=u^{22}=0$ and $u^{12}=-u^{21}$, we will have $u(x,x)=0$ for all $x\in\bbR^2$ and so
  $$
  h(x)=\alpha(x,x)=(\alpha+u)(x,x)
  $$
  for all $x$.

  More generally, let $\cU=\{u\in T^k((\bbC^n)^\star)| u(x,\ldots,x)=0 \: \forall x\in\bbR^n\}$. We see that if $\alpha$ and $\beta$ are two rank-$k$ covariant tensors, then $\alpha(x,\ldots,x)$ and $\beta(x,\ldots,x)$ define the same polynomial if and only if $\alpha-\beta \in\cU$. This means that the homogeneous polynomials of degree $k$ can be identified with the elements of the quotient space $T^k((\bbC^n)^\star)/\cU$. As discussed in \cite{axler2024linear}, elements of the quotient space are equivalence classes of the form $\alpha +\cU$. (Here $\alpha+\cU=\beta+\cU$ if and only if $\alpha-\beta \in\cU$, which holds if and only if $\alpha$ and $\beta$ define the same polynomial.)

  The mapping $p(x)\mapsto p(F x)$ can be extended to a linear mapping $L:T^{k}((\bbC^n)^\star)\to T^k((\bbC^n)^\star)$ by
  $$
  L(\alpha)(v_1,\ldots,v_k)=\alpha(F v_1,\ldots,Fv_k).
  $$
  Note that if $\alpha \in\cU$, then $L(\alpha)\in\cU$, so that $\cU$ is invariant under $L$. As in \cite{axler2024linear}, the quotient map $L/\cU$ is defined by
  $$
  (L/\cU)(\alpha+\cU):=L(\alpha)+\cU. 
  $$
  In particular, say that $p(x)=\alpha(x,\ldots,x)$. Then 
  for any $u\in\cU$, we have that
  \begin{align*}
    (L(\alpha)+u)(x,\ldots,x)&=L(\alpha)(x,\ldots,x)\\
                             &=\alpha(F x,\ldots,F x)\\
                             &=p(F x).
  \end{align*}
  It follows that for all $u\in\cU$, $L(\alpha)+u$ defines the same polynomial as $P_{kk}p$. Thus, when identifying polynomials with tensor quotients, the action of $P_{kk}$  is precisely $L/\cU$. In particular, $P_{kk}$ and $L/\cU$ must have the same eigenvalues.

  Now, we recall a general property of quotient mappings. If $L$ is a linear operator on a finite dimensional space, $\cU$ is invariant under $L$, and $\gamma $ is an eigenvalue of $L/\cU$, then $\gamma$ is  also an eigenvalue of $L$. See Chapter 5, Exercise 38(b) in \cite{axler2024linear}.

To calculate the eigenvalues of $L$, note that 
\begin{align*}
  L(\alpha)(v_1,\ldots,v_k)&=\sum_{i_1,\ldots,i_k=1}^n\sum_{j_1,\ldots,j_k=1}^n\alpha_{i_1,\ldots,i_k}(F_{j_1}^{i_1}v_1^{j_1})\cdots (F_{j_k}^{i_k} v_k^{j_k}) \\
                           &=\sum_{i_1,\ldots,i_k=1}^n\left(\sum_{j_1,\ldots,j_k=1}^m \alpha_{j_1,\ldots,j_k} F_{i_1}^{j_1} \cdots F_{i_k}^{j_k}\right)v_1^{i_1}\cdots v_k^{i_k} \\
                           & =\sum_{i_1,\ldots,i_k=1}^n\left(\sum_{j_1,\ldots,j_k=1}^m \alpha_{j_1,\ldots,j_k} (F^{\otimes k})_{i_1,\ldots,i_k}^{j_1,\ldots,j_k}\right)v_1^{i_1}\cdots v_k^{i_k} 
\end{align*}
  In particular, if $\alpha$ is interpreted as an $n^k$-dimensional row vector, $L$ has the matrix representation
  $$
  L(\alpha)=\alpha F^{\otimes k}.
  $$
  It follows that the set of eigenvalues of $L$, which contains the set of eigenvalues of $P_{kk}$, are precisely the eigenvalues of $F^{\otimes k}$. The specific form of an eigenvalue follows because the eigenvalues of $F^{\otimes k}$ are precisely the $k$-fold products of eigenvalues of $F$. 

  Item~\ref{it:eigenvectorDegreeRelation}: If an eigenvector of $\gamma$ has degree $d$, then $\gamma$ must be an eigenvalue of $P_{dd}$. It then follows that $\gamma$ is an eigenvalue of $F^{\otimes d}$. 
\end{proof}

The following is a discrete-time version of  results from \cite{metafune2002spectrum}. It generalizes the results on linear Gaussian systems from \cite{diaconis2008gibbs,khare2009rates}, which handle the case of reversible Markov operators induced by linear Gaussian systems.

  \begin{proposition}
  \label{prop:linGaussSpec}
  Say that all eigenvalues of $F$ have magnitude less and $1$ and  the pair $(F,G)$ is controllable. Let $\lambda_1,\ldots,\lambda_n$ denote the eigenvalues of $F$ (which may not be distinct). The following statements hold:
  \begin{enumerate}
    \item \label{it:linGaussEvals} If $\gamma \in\spec(P)\setminus\{0,1\}$ then $\gamma = \prod_{j=1}^k\lambda_{i_j}$ for some sequence of numbers $i_j\in \{1,\ldots,n\}$.
    \item \label{it:linGaussAbsGap} $\absgap(P)\ge 1-\rho(F)$ where $\rho(F)$ is the spectral radius of $F$.
    \item \label{it:linGaussGap} Let $\tilde A$ and $m$ be as in Lemma~\ref{lem:elementaryLinGauss}. Then 
      $$ 
      \gap(P)\ge 1-\left(1-2^{-m}\left(1-\|\tilde A\|_2\right)\right)^{1/m} >0. 
      $$ 
  \end{enumerate}
  \end{proposition}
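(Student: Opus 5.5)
The plan is to reduce all three items to the behavior of $P^m$ on $L_0^2(\mu)$, exploiting that $\mu$ is Gaussian $N(0,S)$ and that $P^m$ is again a linear Gaussian kernel: $\bx_{k+m}=A\bx_k+\bw$ with $A=F^m$ and $\bw\sim N(0,W)$, where $W\succ 0$ by Lemma~\ref{lem:elementaryLinGauss}.

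\textbf{Item~\ref{it:linGaussGap}.} The key step is the contraction bound
$$\|P^m\|_{L_0^2(\mu)}\le\|\tilde A\|_2 .$$
To get it, whiten coordinates via $x=S^{1/2}y$, so that $\mu$ becomes the standard Gaussian $\gamma_n$. The chain then reads $y\mapsto \tilde A y+\tilde{\bw}$ with $\tilde{\bw}\sim N(0,I-\tilde A\tilde A^\top)$, using Item~\ref{it:altLyap}. This is a Mehler/Ornstein--Uhlenbeck-type operator with a matrix parameter. Take its SVD $\tilde A=U\Sigma V^\top$. Rotations preserve $\gamma_n$ and act unitarily on $L^2(\gamma_n)$, so the operator factors as (rotation)$\circ M_\Sigma\circ$(rotation). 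Here $M_\Sigma$ is a tensor product of one-dimensional Mehler operators with parameters $\sigma_i\in[0,1)$, and its norm on $L_0^2$ is $\max_i\sigma_i=\|\tilde A\|_2$, because Hermite polynomials diagonalize it with eigenvalues $\prod_i\sigma_i^{k_i}$. One has to check that the rotations and the noise combine correctly; I expect this factorization, and in particular the bookkeeping of the noise covariance $I-\tilde A\tilde A^\top$, to be the main obstacle. A fallback is to use Hermite expansions directly. A second fallback is a coupling/Cauchy--Schwarz argument: write $P^m f(x)=\mathbb E f(Ax+\bw)$ and bound it by Gaussian hypercontractive correlation, using that $(\bx,A\bx+\bw)$ under stationarity is jointly Gaussian with cross-correlation $\tilde A$ in whitened coordinates; the maximal correlation of a Gaussian pair equals the top canonical correlation $\|\tilde A\|_2$. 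The maximal-correlation argument is cleanest, and I would lead with it. Given the contraction bound, Lemma~\ref{lem:absGap2Gap}'s argument yields $\gap(P^m)\ge 1-\|\tilde A\|_2$, and Lemma~\ref{lem:iteratedGap} then gives the displayed bound. Positivity follows from Item~\ref{it:contractiveMat}.

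\textbf{Item~\ref{it:linGaussEvals}.} Polynomials are dense in $L^2(\mu)$ for Gaussian $\mu$. Decompose $L^2(\mu)=\bigoplus_d \cH_d$, where $\cH_d=\cP_d\ominus\cP_{d-1}$. Because $\cP_d$ is $P$-invariant (Item~\ref{it:invariance} of Lemma~\ref{lem:polyRestrictions}), $P$ is block upper triangular with respect to this decomposition, and its diagonal blocks are exactly the $P_{dd}$, whose spectra lie in $\spec(F^{\otimes d})$. Controlling the infinite triangular operator needs compactness of the tail. For this I would use that $\Pi_{\cH_{\ge D}}P^m$ has norm at most $\|\tilde A\|_2^{D}$, which comes from the same Hermite/Mehler picture since chaos of order $D$ contracts by $\sigma^D$. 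Hence $P^m$ is a norm limit of finite-rank-plus-small operators. For $\gamma\neq 0$, pick $D$ with $\|\tilde A\|_2^{D}<|\gamma|^m$. Then $\gamma^m\notin\spec$ of the tail block, and the triangular structure with finitely many finite-dimensional diagonal blocks forces $\gamma$ to be an eigenvalue of some $P_{dd}$ with $d<D$. Item~\ref{it:eigenvalueStructure} then gives the product form.

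\textbf{Item~\ref{it:linGaussAbsGap}.} From Item~\ref{it:linGaussEvals}, every nonzero point of $\spec(P|L_0^2(\mu))$ is a product of $k\ge1$ eigenvalues of $F$. The case $k=0$ is excluded because constants have been removed, and $\gamma=1$ lies in $L_0^2$ only via $k=0$. Each such product has modulus at most $\rho(F)^k\le\rho(F)$, so $\absgap(P)\ge1-\rho(F)$.
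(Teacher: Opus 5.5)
Your proposal is correct, but it follows a different route from the paper.

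\textbf{The paper's route.} It first proves that $Q=P^m$ is Hilbert--Schmidt, via a Schur-complement/Woodbury computation showing $K\in L^2(\mu\otimes\mu)$, so nonzero spectral points of $Q$ are eigenvalues. It gets the tail estimate $\|\Pi_{\cC_d}Q|_{\cC_d}\|\le\|\tilde A\|_2^{d+1}$ in three steps:
\begin{enumerate}
\item it notes that $Q^\star Q$ is again a linear Gaussian kernel with transition matrix $A^\star A$;
\item it builds an orthonormal polynomial eigenbasis of $Q^\star Q$ along the chaos decomposition;
\item it bounds the eigenvalues on $\cH_d$ through Lemma~\ref{lem:polyRestrictions} applied to $(A^\star A)^{\otimes d}$.
\end{enumerate}
It then shows that eigenvectors of $Q$ for nonzero eigenvalues are polynomials, and passes back to $P$ by taking $m$-th roots.

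\textbf{Your route.} You get the tail estimate from the factorization $P^m=R_{V^\top}M_\Sigma R_U$ in whitened coordinates, with $R_Of(z)=f(Oz)$. The bookkeeping you were worried about is fine. The whitened noise is $U\xi$ with $\xi\sim N(0,I-\Sigma^2)$. The rotations are unitary and preserve each chaos, and $M_\Sigma$ has norm exactly $\|\tilde A\|_2^d$ on $\cH_d$. You then localize the spectrum using only the inclusion
$$\spec(P)\subseteq\spec\bigl(\Pi_{\cP_{D-1}}P|_{\cP_{D-1}}\bigr)\cup\spec(T_D),\qquad T_D:=\Pi_{\cH_{\ge D}}P|_{\cH_{\ge D}},$$
which holds for the $P$-invariant splitting $\cP_{D-1}\oplus\cH_{\ge D}$.

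\textbf{What each approach buys.} Your route has three advantages:
\begin{itemize}
\item It needs no Hilbert--Schmidt computation. Compactness is never actually used, so you can drop that remark.
\item It yields the exact value $\|P^m\|_{L_0^2(\mu)}=\|\tilde A\|_2$, and even invariance of each $\cH_d$ under $P^m$.
\item Because the block argument is applied to $P$ itself, you never recover $\gamma$ from $\gamma^m$. As written in the paper, that step only pins $\gamma$ down up to an $m$-th root of unity.
\end{itemize}
The paper's route, in exchange, stays inside its own Lemma~\ref{lem:polyRestrictions} and needs no Mehler/Hermite facts.

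\textbf{Points to make explicit when writing it up.}
\begin{itemize}
\item State that $\Pi_{\cH_{\ge D}}P^m|_{\cH_{\ge D}}=T_D^m$, which holds because $\cP_{D-1}$ is $P$-invariant. Then $\|T_D^m\|<|\gamma|^m$ together with spectral mapping gives $\gamma\notin\spec(T_D)$.
\item For Item~2, exclude $\gamma=1$ by running the same argument on $L_0^2(\mu)$ with $\cH_0$ removed, or directly from $\|P^m\|_{L_0^2(\mu)}<1$.
\item The maximal-correlation argument only supplies the $d=0$ bound needed for Item~3, and hypercontractivity plays no role. Item~1 therefore still rests on the Mehler factorization.
\end{itemize}
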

  \begin{proof}
    Item~\ref{it:linGaussEvals}: Let $m\ge 1$, $S$, $A$, and $W$ be as in Lemma~\ref{lem:elementaryLinGauss}. By the spectral mapping theorem $\spec(P^m)=\spec(P)^m$, and so it suffices to prove the statement for $Q:=P^m$ and then utilize that $\lambda$ is an eigenvalue of $A$ if and only if $\lambda^{1/m}$ is an eigenvalue of $F$.

The invariant density can be written explicitly as
  \begin{equation}
    \label{eq:gaussianDensity}
  \mu(dy)=\frac{1}{(2\pi)^{\frac{n}{2}}|S|^{\frac{1}{2}}}\exp\left(-\frac{1}{2}y^\top S^{-1}y \right)dy=:p_S(y)dy.
\end{equation}

The iterated kernel, $Q=P^m$ has the form:
  $$
  Q(x,dy)=P^m(x,dy)=\frac{1}{(2\pi)^{\frac{n}{2}}|W|^{\frac{1}{2}}}\exp\left(-\frac{1}{2}(y-Ax)^\top W^{-1}(y-Ax) \right)dy.
  $$
  In other words, the iterated system is  a special case of the linear Gaussian system, \eqref{eq:linGauss}, with transition matrix, $A$, and noise covariance $W$. Furthermore, $A$ is stable and the system $(A,W^{1/2})$ is controllable, so that Lemmas~\ref{lem:elementaryLinGauss} and \ref{lem:polyRestrictions} can be applied to $Q$ as well. 

  The iterated kernel,  $Q$, admits a density with respect to  $\mu$ given by:
  $$
  K(x,y)=\frac{|S|^{\frac{1}{2}}}{|W|^{\frac{1}{2}}}\exp\left(
    -\frac{1}{2}(y-Ax)^\top W^{-1}(y-Ax) +\frac{1}{2}y^\top S^{-1} y
  \right).
  $$

  We will show that $K\in L^2(\mu\otimes \mu)$, and thus $Q$ is a Hilbert-Schmidt operator, which implies that $Q$ is compact.

  Direct calculation gives
  \begin{align*}
    \MoveEqLeft[0]
    \int_{\cX\times\cX}K(x,y)^2 \mu(dx)\mu(dy)=
    \frac{1}{(2\pi)^n|W|}\int_{\bbR^n\times \bbR^n}
    \exp\left (-
  \begin{bmatrix}
  x\\
  y
  \end{bmatrix}^\top R \begin{bmatrix}
x \\
y
\end{bmatrix}\right)dx dy,
  \end{align*}
  where
  $$
  R = \begin{bmatrix}
  \frac{1}{2}S^{-1}+A^\top W ^{-1}A & -A^\top W ^{-1} \\
  -W^{-1}A & W^{-1}-\frac{1}{2}S^{-1}
  \end{bmatrix}.
  $$
  It follows that $K\in L^2(\mu\otimes \mu)$ as long as $R \succ 0$, where $\succ$ denotes the positive definite partial order.

  Note that $S\succeq W$ and so $W^{-1}\succeq S^{-1}$. Thus, the diagonal blocks are both positive definite. Using Schur complements, the Woodbury matrix identity, and some algebra:
  \begin{align*}
    R\succ 0 &\iff \frac{1}{2}S^{-1}+A^\top W ^{-1}A- A^\top W^{-1}\left( W^{-1}-\frac{1}{2}S^{-1}\right)^{-1} W^{-1}A \succ 0\\
             &\iff \frac{1}{2}S^{-1}+A^\top \left(W^{-1}-W^{-1}(W+W(2S-W)^{-1}W)W^{-1} \right)A \succ 0 \\
             &\iff \frac{1}{2}S^{-1}-A^\top (2S-W)^{-1}A \succ 0 \\
             &\iff \begin{bmatrix}
               \frac{1}{2}S^{-1} & A^\top \\
               A & 2S-W
             \end{bmatrix} \succ 0 \\
             &\iff 2S-W - 2A S A^\top \succ 0 \\
             &\iff W \succ 0 .
  \end{align*}
  Since $W$ is positive definite, we have that $R$ is positive definite, and so $K\in L^2(\mu\otimes\mu)$. 

  So, $Q$ is a compact operator, and thus the spectrum is countable and all non-zero elements of the spectrum are eigenvalues. See, for example, Theorem 4.25 of \cite{rudin1991functional}.

  Since $A=F^m$ and $\lambda_i$ are the eigenvalues of $F$, we have that the eigenvalues of $A$ are $\lambda_1^{m},\ldots,\lambda_n^m$. If $\gamma $ is an eigenvalue of $Q$ with a polynomial eigenvector, Lemma~\ref{lem:polyRestrictions} implies that either $\gamma = 1$ or $\gamma = \prod_{j=1}^{k}\lambda_{i_j}^{m}$ for some sequence of numbers $i_j\in\{1,\ldots,n\}$. So, to prove item~\ref{it:linGaussEvals} for $Q$, it suffices to show that every non-zero eigenvalue of $Q$ has a polynomial eigenvector. In this case, the corresponding element of $\spec(P)$ will be $\gamma^{1/m}$, which will have the claimed form.  

  Recall that $\cP_d$ is the set of polynomials of degree at most $d$. Let $\cC_d$ be the orthogonal complement of $\cP_d$ in $L^2(\mu)$.

For a fixed value of $d$, we can uniquely decompose $f\in L^2(\mu)$ as $f=f_1+f_2$, where:
\begin{equation}
  \label{eq:2dSplit}
\begin{bmatrix}
  f_1 \\
  f_2
\end{bmatrix}
=
\begin{bmatrix}
  \Pi_{\cP_d}f \\
  \Pi_{\cC_d}f
\end{bmatrix}.
\end{equation}
Note that in this case, $f_1$ is a polynomial of degree at most $d$. 

Applying the same decomposition to $g=Qf$, we have that $g=g_1+g_2$, where
\begin{equation}
  \label{eq:triangularDecomposion}
  \begin{bmatrix}
    g_1 \\
    g_2
    \end{bmatrix} = \begin{bmatrix}
  \left(\Pi_{\cP_d} Q|_{\cP_d}\right) &   \left(\Pi_{ \cP_d} Q|_{\cC_d}\right) \\
    0 &   \left(\Pi_{\cC_d} Q|_{\cC_d}\right)
  \end{bmatrix}
\begin{bmatrix}
  f_1 \\
  f_2
\end{bmatrix}.
\end{equation}
The block triangular structure arises because the invariance property from Lemma~\ref{lem:polyRestrictions} implies that $\Pi_{\cC_d}Q|_{\cP_d}=0$. 

Recall the matrix $\tilde A = S^{-\frac{1}{2}}F^m S^{\frac{1}{2}}$. Lemma~\ref{lem:elementaryLinGauss} showed that $\|\tilde A\|_2<1$. We will show that the induced norm of $\Pi_{\cC_d}Q|_{\cC_d}$ satisfies:
\begin{equation}
  \label{eq:operatorNormDecay}
  \|\Pi_{\cC_d}Q|_{\cC_d}\|_{L^2(\mu)}\le \|\tilde A\|_2^{d+1}. 
\end{equation}
Thus, for any $\gamma \ne 0$, if $\|\tilde A\|_2^{d+1} < |\gamma|$, then $\gamma I - \Pi_{\cC_d}Q|_{\cC_d}$ has a bounded inverse in $\cC_d$. 

Now, say that $\gamma\ne 0$ is an eigenvalue of $Q$. Pick $d$ such that $\|\tilde A\|_2^{d+1} < |\gamma|$. The eigenvalue equation can be expressed as:
$$
  \begin{bmatrix}
    0 \\
    0
    \end{bmatrix} = \begin{bmatrix}
  \left(\gamma I - \Pi_{\cP_d} Q|_{\cP_d}\right) &   \left(-\Pi_{ \cP_d} Q|_{\cC_d}\right) \\
    0 &   \left(\gamma I - \Pi_{\cC_d} Q|_{\cC_d}\right)
  \end{bmatrix}
\begin{bmatrix}
  f_1 \\
  f_2
\end{bmatrix}.
$$
Assuming that \eqref{eq:operatorNormDecay} holds, invertibility of $\gamma I -  \Pi_{\cC}Q|_{\cC_d}$ implies that $f_2=0$. Thus, any eigenvector corresponding to $\gamma$ must be  a polynomial of degree at most $d$. 

So, to complete the proof of item~\ref{it:linGaussEvals}, it suffices to prove \eqref{eq:operatorNormDecay}. Note that non-expansiveness of projections implies that $\|\Pi_{\cC_d}Q|_{\cC_d}\|_{L^2(\mu)}\le \|Q|_{\cC_d}\|_{L^2(\mu)}$, so it suffices to bound $\|Q|_{\cC_d}\|_{L^2(\mu)}$. 

Let $Q^\star$ denote the adjoint kernel for $Q$, and let $M=Q^\star Q$, which is known as the multiplicative reversibilization of $Q$. Note that for $f\in L^2(\mu)$, 
$$
\|Qf\|_{L^2(\mu)}^2 = \langle f, Mf\rangle.
$$

The adjoint kernel, $Q^\star$ can be expressed as   $Q^\star(x,dy)=K(y,x)\mu(dy)$, which, after some algebra, is given by
  $$
  Q^\star(x,dy)=\frac{1}{(2\pi)^{\frac{n}{2}}|\Omega|^{\frac{1}{2}}}\exp\left(-\frac{1}{2}(y-A^\star x)^\top \Omega^{-1}(y-A^\star x) \right)dy
  $$
  where
  \begin{align*}
    \Omega &=(S^{-1}+A^\top W^{-1} A)^{-1}\\
    A^\star&=\Omega A^\top W^{-1}=SA^\top S^{-1}.
  \end{align*}

  Here $A^\star$ is the adjoint of $A$ with respect to the inner product $\langle x,y\rangle = x^\top S^{-1} y$. 

  Thus, we have that $Q^\star$ encodes a linear Gaussian system with transition matrix $A^\star$ and covariance $\Omega$. It follows that  $Q^\star Q = M$ encodes a special case of the linear Gaussian system, \eqref{eq:linGauss}, with transition matrix $A^\star A$ and covariance $\Gamma:=A^\star W(A^\star)^\top+\Omega\succ 0$. Note that $A^{\star} A=S^{\frac{1}{2}}\tilde A^\top \tilde A S^{-\frac{1}{2}}$, so that $A^\star A$ and $\tilde A^\top \tilde A$ have the same eigenvalues. Then, since $\|\tilde A\|_2<1$, the eigenvalues of $A^\star A$ must be bounded in magnitude by $\|\tilde A\|_2^2 <1$. In particular, Lemmas~\ref{lem:elementaryLinGauss} and \ref{lem:polyRestrictions} hold for $M$. 

  Lemma~\ref{lem:polyRestrictions} implies that $\cP_d $ is invariant with respect to $M$ for all $d\ge 0$.
 Furthermore, $M$ is self-adjoint. Thus, the finite-dimensional operator $\Pi_{\cP_d} M|_{\cP_d}$ is also self-adjoint. Thus, if $g_d$ is the dimension of $\cP_d$, the space $\cP_d$ must have an orthonormal basis $\{\psi_1,\ldots,\psi_{g_d}\}$, such that $\psi_i$ are all eigenvectors of $M$. 

  For any $d\ge 0$, let $\cH_{d+1}$ be the set of polynomials of degree $d+1$ which are orthogonal to all polynomials of degree at most $d$. Then  
  $$
  \cP_{d+1}= \cP_{d}\oplus \cH_{d+1}.
  $$

If $\{\phi_{g_d+1},\ldots,\phi_{g_{d+1}}\}$ forms an orthonormal basis of $\cH_{d+1}$, then \\
$\{\psi_1,\ldots,\psi_{g_d},\phi_{g_{d}+1},\ldots,\phi_{g_{d+1}}\}$ forms an orthonormal basis of $\cP_{d+1}$. The matrix of $\Pi_{\cP_{d+1}} M|_{\cP_{d+1}}$ with respect to this basis must have the form:
$$
\begin{bmatrix}
  D & 0 \\
  0 & H
\end{bmatrix}
$$
where $D$ is a $g_d\times g_d$ diagonal matrix and $H$ is a Hermitian matrix. The eigenvectors of $H$ can be used to construct eigenvectors of $M$, $\psi_{g_d+1},\ldots,\psi_{g_{d+1}} \in\cH_{d+1}$, such that $\{\psi_1,\ldots,\psi_{g_{d+1}}\}$ is an orthonormal basis of $\cP_{d+1}$. Continuing in this fashion, we can inductively construct bases of $\cP_d$ for all orders $d\ge 0$.

For notational consistency, set $\cH_0=\cP_0$, which is the space of constants. Then, the orthogonal subspaces, $\cH_i$, define the classical polynmial chaos expansion, and so
$$
L^2(\mu)=\bigoplus_{i=0}^{\infty}\cH_i.
$$
See Theorem 2.6 of \cite{janson1997gaussian}.

If we set $g_{0}=1$, $g_{-1}=0$, and $\psi_{1}=1$, then $\cH_d=\mathrm{span}\{\psi_{g_{d-1}+1},\ldots,\psi_{g_d}\}$ for all $i\ge 0$.

Each $\psi_i$ is an eigenvector of $M$, so for all $i\ge 1$, let $\zeta_i$ be the eigenvalue corresponding to $\psi_i$. Then, for $f\in L^2(\mu)$, the eigenvalue decomposition implies
$$
Mf=\sum_{i=1}^{\infty}\zeta_i \langle \psi_i,f\rangle \psi_i.
$$

Furthermore, for each $i$, $\psi_i\in\cH_d$ for some $d$. It follows from Lemma~\ref{lem:polyRestrictions} that $\zeta_i$ is an eigenvalue of $(A^\star A)^{\otimes d}$. As discussed above, all of the eigenvalues of $A^\star A$ have magnitude bounded by $\|\tilde A\|_2^2 <1$. It follows that $|\zeta_i|\le \|\tilde A\|_2^{2d}$.

If  $f\in\cC_d$, then there are coefficients $\beta_i$ for $i>g_d$ such that 
$$
f=\sum_{i>g_d}\beta_i \psi_i \quad \textrm{and} \quad \|f\|_{L^2(\mu)}^2=\sum_{i>g_d}|\beta_i|^2.
$$ 
Furthermore, if $f\in\cC_d$, then:
$$
  \langle \psi_i,f\rangle =  \begin{cases}\beta_i & i> g_d \\
    0 & i\le g_d \\
  \end{cases}.
$$

It follows that 
\begin{align*}
  \|Qf\|_{L^2(\mu)}^2 &= \langle f,M f\rangle \\
           &=\sum_{i=1}^{\infty}\zeta_i \left|\langle f,\psi_i\rangle\right|^2  \\
           &=\sum_{i>g_d}\zeta_i |\beta_i|^2 \\
           &\le \|f\|_{L^2(\mu)}^2 \max\{\zeta_i|i>g_d\} \\
           &\le\|f\|_{L^2(\mu)}^2 \|\tilde A\|_2^{2(d+1)}.
\end{align*}
Thus~\eqref{eq:operatorNormDecay} holds and the proof of item~\ref{it:linGaussEvals} is complete. 

Item~\ref{it:linGaussAbsGap} is now an immediate consequence of item~\ref{it:linGaussEvals}.

For item~\ref{it:linGaussGap}, note that  $L^2_0(\mu)$ is precisely $\cC_0$, which is an invariant subspace for $P^m=Q$. Thus 
$$
\|\Pi_{\cC_0}Q|_{\cC_0}\|_{L^2(\mu)} = \|P^m\|_{L_0^2(\mu)}.
$$
It follows from \eqref{eq:operatorNormDecay} that $\|P^{m}\|_{L_0^2(\mu)}\le \|\tilde A\|_2 < 1$. The result now follows from Lemma~\ref{lem:absGap2Gap}.
\end{proof}

\section{A $V$-Uniformly Geometrically Ergodic Markov Chain with No Spectral Gap}
\label{sec:noGap}

In this subsection, we construct a Markov kernel, $P$, over a countable state space such that $P$ is $V$-uniformly geometrically ergodic, but $\gap(P)=0$. The Markov kernel in this subsection is a simplified version of the Markov kernel from \cite{haggstrom2005central}. The Markov kernel from \cite{haggstrom2005central} was used in \cite{kontoyiannis2012geometric} to give an example of a Markov kernel which is $V$-uniformly geometrically ergodic, but has $\absgap(P)=0$. By Example~\ref{ex:noGap2AbsGap}, having $\absgap(P)=0$ does not necessarily imply that $\gap(P)=0$. In contrast, by Lemma~\ref{lem:absGap2Gap}, if we can show that $\gap(P)=0$, then $\absgap(P)=0$ as well. In particular, the next result extends Theorem 1.4 of \cite{kontoyiannis2012geometric} and gives an alternative proof.

\begin{proposition}
  There is a Markov chain on a countable state space which is $V$-uniformly geometrically ergodic, but has $\gap(P)=\absgap(P)=0$. 
\end{proposition}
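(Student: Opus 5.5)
We will build an explicit countable-state chain in the spirit of \cite{haggstrom2005central}. Take $\cX=\{0\}\cup\{(i,j): i\ge 1,\ j\ge 1\}$. The chain lives on a countable family of ``spokes'' indexed by $i$, all attached to a hub state $0$. From $0$ the chain jumps to $(i,1)$ with probability $p_i>0$, where $\sum_i p_i=1$. On spoke $i$ it climbs deterministically, $(i,j)\to(i,j+1)$ for $j<i$, and from $(i,i)$ it returns to $0$. Equivalently, on spoke $i$ it always moves upward and then returns with probability one.

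For the proof we want a variant in which returns are geometric, since $V$-uniform ergodicity needs a drift condition. So on spoke $i$, from $(i,j)$ move to $(i,j+1)$ with probability $1-q$ and to $0$ with probability $q$, with $q\in(0,1)$ fixed and independent of $i$. Spoke $i$ is additionally capped at length $i$.

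\textbf{Step 1: ergodicity.} The invariant measure has the form $\mu(i,j)\propto p_i(1-q)^{j-1}$. Take $V(i,j)=\beta^{j}$ with $1<\beta<(1-q)^{-1}$, and $V(0)=1$. Then
\[
PV(i,j)\le (1-q)\beta\, V(i,j)+q,
\]
so $PV\le \lambda V+b\indic_{\{0\}}$ off a finite-$V$ set. More carefully, $PV(0)=\sum_i p_i\beta<\infty$, and $\{0\}$ is an atom, hence small. The chain is irreducible and aperiodic, since we may add a self-loop at $0$. Theorem~15.2.4 of \cite{douc2018markov}, or the standard drift criterion, then gives $V$-uniform geometric ergodicity.

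\textbf{Step 2: $\gap(P)=0$.} Directly from \eqref{eq:gap}, we exhibit test functions with $\cE(f,f)/\var(f)\to 0$. The point is that the only ``fast'' mixing comes through the hub, and a function that depends on the spoke index $i$ and vanishes near the hub is moved slowly.

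The natural choice is $f_i$ supported on spoke $i$, with $f_i(i,j)=g(j)$, a slowly growing profile. However, a return to $0$ contributes $q\,\mu(i,j)|g(j)|^2$ to the Dirichlet form, which is comparable to the variance. This kills the naive attempt, because geometric return with a fixed rate $q$ gives a positive gap.

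This is the main obstacle. We therefore let the return rates depend on the spoke: use $q_i\to0$ on spoke $i$ while keeping a drift condition through a spoke-dependent $V$. A uniform $\lambda<1$ in the drift inequality can be retained by the trick of \cite{haggstrom2005central}: on spoke $i$ the chain moves deterministically \emph{down} toward $0$, so $V(i,j)=\beta^j$ satisfies $PV=\beta^{-1}V$ uniformly in $i$. Concretely, from $0$ jump to $(i,i)$ with probability $p_i$, and from $(i,j)$ go to $(i,j-1)$, where $(i,0):=0$. The drift then holds with $\lambda=\beta^{-1}$ as long as $\sum_i p_i\beta^i<\infty$; take $p_i\propto \alpha^{i}$ with $\alpha\beta<1$.

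Now take $f_i(i,j)=1$ for $j\ge i/2$ on spoke $i$, and $0$ elsewhere. The quantity $\cE(f_i,f_i)$ counts only the single transition $(i,\lceil i/2\rceil)\to(i,\lceil i/2\rceil-1)$, which has mass $\mu(i,\cdot)\approx p_i$ per state. Meanwhile $\var(f_i)\approx (i/2)p_i\mu(0)$ for large $i$. Hence $\cE/\var=O(1/i)\to0$, so $\gap(P)=0$.

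Finally, Lemma~\ref{lem:absGap2Gap} gives $\absgap(P)=0$, since $\absgap(P)>0$ would force $\gap(P)>0$. The computation of $\mu$ is a short mass-balance, with $\mu(i,j)=\mu(0)p_i$ for $1\le j\le i$, normalizable because $\sum_i ip_i<\infty$.
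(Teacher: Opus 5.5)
Your final chain is essentially the paper's. It has spokes of length $i$ entered at the top $(i,i)$ from the hub with geometrically small probability, deterministic descent back to $0$, and $V=\beta^{j}$, which gives $PV=\beta^{-1}V$ off the atom $\{0\}$. So the ergodicity half of your argument coincides with the paper's.

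The genuine difference is how you obtain $\gap(P)=0$. The paper builds exact eigenvectors of $P_a$ supported on spokes of length $2^k-1$, with eigenvalues $\lambda_{k-2}/2\to 1$. These are antisymmetric vectors built from the polynomials $q_n$, and the construction needs the factorization $q_{2^k-1}=\prod_i r_i$, the root recursion for $r_i$, and the reflection identity. You instead plug the indicator of the upper half of spoke $i$ into the variational definition \eqref{eq:gap}. Only transitions across the boundary of its support contribute, so $\cE(f_i,f_i)=O(p_i)$ while $\var(f_i)=\Theta(i\,p_i)$.

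Your route is much shorter, works for every spoke length rather than only $2^k-1$, and makes the mechanism transparent: a long spoke is a set of mass $\Theta(i p_i)$ with boundary flow only $O(p_i)$. The paper's route yields actual points of $\spec(P_a)$ accumulating at $1$. That is more than the proposition needs, and by Lemma~\ref{lem:gapFromSpectrum} it is equivalent for irreducible $P$.

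There are two small corrections. First, you say $\cE(f_i,f_i)$ counts only the transition $(i,\lceil i/2\rceil)\to(i,\lceil i/2\rceil-1)$. The entry transition $0\to(i,i)$ also contributes, since $f_i(0)=0$ and $f_i(i,i)=1$. So in fact $\cE(f_i,f_i)=\mu(0)p_i$ exactly, and the ratio is still $O(1/i)$.

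Second, your Step~1 verifies ergodicity for the geometric-return variant you later abandon. In a write-up, present only the final chain and check its properties directly:
\begin{itemize}
\item irreducibility, since everything communicates through $0$;
\item aperiodicity, since return times to $0$ are $i+1$ for $i\ge 1$ and have gcd $1$, so no self-loop is needed;
\item the drift $PV\le\beta^{-1}V+b\indic_{\{0\}}$ with $b=\sum_i p_i\beta^i<\infty$.
\end{itemize}
If you also want the paper's additional remark that $V\in L^2(\mu)$, choose $\alpha\beta^2<1$ rather than just $\alpha\beta<1$.
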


\begin{proof}
   
The state space for the Markov chain is
$$
\cX=\{0\}\cup \bigcup_{a=1}^\infty\{(a,1),(a,2),\ldots,(a,a)\}.
$$

For simpler notation, we identify $P(x,\{y\})$ with $P(x,y)$ and identify $\mu(\{x\})$ with $\mu(x)$ when $x,y\in \cX$. The Markov kernel is given by:
$$
P(x,y)=\begin{cases}
  \frac{1}{2} & x=y=0 \\
  2^{-a-1} & x=0, y=(a,a) \\
  1 & x=(a,b), y=(a,b-1) \textrm{ with } 1<b\le a\\
  1 & x=(a,1), y=0 \textrm{ with } a\ge 1 \\
  0 & \textrm{otherwise}
\end{cases}
$$
and its invariant probability measure is
$$
\mu(x)=\begin{cases}
  \frac{1}{2} & x=0 \\
  2^{-a-2} & x=(a,b) \textrm{ with } 1\le b\le a.
\end{cases}
$$

Here $P$ is irreducible and aperiodic, and $0$ is a recurrent atom. In particular, $\{0\}$ is a petite set. We will show that for any $\rho \in (1,\sqrt{2})$ that $P$ is $V$-uniformly geometrically ergodic with
$$
V(x)=\begin{cases}
  \rho^b & x=(a,b) \textrm{ with } 1\le b \le a \\
  1 & x=0.
\end{cases}
$$

Direct calculation shows that $P$ and $V$ satisfy the drift inequality:
$$
PV(x)\le \rho^{-1} V(x)+\left(\frac{1}{2}+\frac{1}{2}\frac{\rho}{2-\rho}\right)\indic_{\{0\}}(x) \textrm{ for all } x\in\cX.
$$
Furthermore,
$$
\|V\|_{L^2(\mu)}^2 = \frac{1}{2}+\frac{1}{2}\frac{\rho^2}{2-\rho^2}.
$$

So, $P$ is $V$-uniformly geometrically ergodic for some $V\in L^2(\mu)$. 

Now we turn to showing that $P_a=\frac{1}{2}(P+P^\star)$ has a sequence of eigenvalues (with respect to $L^2(\mu)$) which converges to $1$. 

Since $\mu(y)>0$ for all $y\in\cX$, we have that $P$ admits a kernel density given by $K(x,y)=P(x,y)/\mu(y)$. The corresponding adjoint kernel is given by $P^\star(x,y)=K(y,x)\mu(y):$
$$
P^\star(x,y)=\begin{cases}
  \frac{1}{2} & x=y=0 \\
  2^{-a-1} & x=0, y=(a,1) \\
  1 & x=(a,b), y=(a,b+1) \textrm{ with } 1\le b < a \\
  1 & x=(a,a), y=0 \textrm{ with } a\ge 1 \\
  0 & \textrm{otherwise}.
\end{cases}
$$

If we order the states like $0,(1,1),(2,1),(2,2),\ldots,(n,1),\ldots,(n,n),\ldots$, then $P_a=\frac{1}{2}(P+P^\star)$ can be represented by the infinite matrix:
$$
P_a=\begin{bmatrix}
  \frac{1}{2} & \frac{1}{4} & \frac{1}{2^{2+2}}c_2^\top & \frac{1}{2^{3+2}}c_3^\top & \cdots & \frac{1}{2^{n+2}}c_n^\top &  \cdots \\
  1 & 0 & 0 & 0 & \cdots & 0 & \cdots\\
  \frac{1}{2}c_2 & 0 & \frac{1}{2}M_2  & 0 & \cdots & 0 & \cdots \\
  \frac{1}{2}c_3 & 0 & 0 & \frac{1}{2}M_3 &  & 0 &  \cdots \\
  \vdots & \vdots & \vdots & &  \ddots & \vdots  \\
  \frac{1}{2}c_n & 0 & 0 & & & \frac{1}{2}M_n \\
  \vdots & \vdots & \vdots & & & & \ddots
\end{bmatrix}
$$
where
\begin{align*}
  c_n^\top & = \begin{bmatrix}
    1 &
    0 &
    \cdots &
    0 &
    1
  \end{bmatrix} \in\mathbb{R}^{1\times n} \\
  M_n&= \begin{bmatrix}
    0 & 1 & 0 \\
    1 & 0 & 1 \\
    0 & 1 & 0 & \\
    & & & \ddots \\
    & & & & 0 & 1 \\
    & & &  & 1 & 0
  \end{bmatrix} \in\bbR^{n\times n} .
\end{align*}

So, if $v$ is an eigenvector of $M_n$ such that $\lambda v = M_n v$ and $c_n^\top v=0$, then $\tilde v=\begin{bmatrix}0 \\ v \\ 0\end{bmatrix}$ (with appropriately sized zeros) is an eigenvector of $P_a$ with $P_a \tilde v = \frac{1}{2}\lambda \tilde v$. If $\frac{\lambda}{2} \ne 1$, then we must have $\mu \tilde v=0$, since
$$
\mu P_a \tilde v = \mu \tilde v = \frac{\lambda}{2}\mu \tilde v \implies \left(1-\frac{\lambda}{2}\right)\mu \tilde v = 0. 
$$
Furthermore, since $\tilde v$ has only finitely many non-zero values, it must have $\|\tilde v\|_{L^2(\mu)}<\infty$.

We will show, in particular, that for all $k\ge 2$, there is an eigenvector $v_k$ of $M_{2^{k}-1}$ such that $M_{2^{k}-1}v_k=\lambda_{k-2}v_k$ and $c_{2^k-1}^\top v_k=0$, where $\lambda_{k-2}$  is defined by the recursion:
\begin{align*}
  \lambda_0&=0 \\
  \lambda_{i+1}&=\sqrt{2+\lambda_i}.
\end{align*}

If $0\le \lambda_i < 2$, the recursion satisfies $\lambda_i < \lambda_{i+1}< 2$. So, $\lambda_i$ must converge to a fixed point of the recursion in the interval $[0,2]$, and the only such fixed point is $2$. Thus, the corresponding sequence of eigenvectors of $P_a$, denoted by $\tilde v_k = \begin{bmatrix}0\\ v_k \\ 0\end{bmatrix}$, satisfies
\begin{align*}
  \lim_{k\to\infty}
  \frac{\cE_P(\tilde v_k,\tilde v_k)}{\var(\tilde v_k)}&=\lim_{k\to\infty}\frac{\langle \tilde v_k,(I-P_a)\tilde v_k \rangle }{\|\tilde v_k\|_{L^2(\mu)}^2}\\
                                                       &=\lim_{k\to\infty}\left(1-\frac{\lambda_{k-2}}{2}\right) \\
                                                       &=0. 
\end{align*}
So, if such eigenvalues and eigenvectors exist, we must have $\gap(P)=0$, and then Lemma~\ref{lem:absGap2Gap} implies $\absgap(P)=0$ as well. 

To construct the required eigenvectors, we will derive several properties of the eigenvectors of $M_n$ for $n\ge 1$. For consistency of the analysis, we set $M_1=0\in\bbR$.

Define a sequence of polynomials, $q_i$ by:
\begin{subequations}
\label{eq:charPolyRecursion}
\begin{align}
  q_0(s)&=1 \\
  q_1(s)&=s \\
  q_{i}(s)&=sq_{i-1}(s)-q_{i-2}(s) \textrm{ for } i\ge 2.
\end{align}
\end{subequations}
Throughout this proof, $s$ will denote the general variable for the polynomials constructed. 

Manipulation of the equation $M_n v = \lambda v$ shows that the characteristic polynomial of $M_n$ is $q_n$ and
$v$ is an eigenvector of $M_n$ if and only if it has the form
\begin{equation}
  \label{eq:eigenvectorForm}
v = z\begin{bmatrix}
  q_0(\lambda) \\
  q_1(\lambda) \\
  \vdots \\
  q_{n-1}(\lambda)
\end{bmatrix}
\end{equation}
where $\lambda$ is an eigenvalue of $M_n$ and $z$ is a non-zero scalar. Note that $q_0(\lambda)=1$, so that $v\ne 0$ as long as $z\ne 0$. In particular, when choosing an eigenvector, we can set $z=1$. 

To find the desired eigenvalues and eigenvectors, we will also utilize the following auxiliary sequence of polynomials:
\begin{align*}
  r_0(s)&=s \\
  r_{i+1}(s)&=r_i(s)^2-2 \textrm{ for } i\ge 0.
\end{align*} 

The existence of the desired eigenvalue/eigenvector pairs is proved from the following claims:
\begin{enumerate}
\item For $k\ge 1$,  $q_{2^k-1}(s)=\prod_{i=0}^{k-1}r_i(s)$ \label{claim:factor}
\item \label{claim:roots}
  The roots of $r_i$ are given by $\hat \lambda_{i,0},\cdots,\hat \lambda_{i,2^i-1}$ which are defined recursively by:
  \begin{align*}
    \hat \lambda_{0,0}&=0 \\
    \hat \lambda_{i+1,2j}&=\sqrt{2+\hat \lambda_{i,j}} \textrm{ for } j=0,\ldots,2^i-1 \\
    \hat \lambda_{i+1,2j+1}&=-\sqrt{2+\hat \lambda_{i,j}} \textrm{ for } j=0,\ldots,2^i -1
  \end{align*}
\item
  \label{claim:opposites}
  If $q_n(\lambda)=0$, then $q_{n+i}(\lambda)=-q_{n-i}(\lambda)$ for $i=0,\ldots,n$. 
\end{enumerate}

Before proving the claims, we  show how they can be used to get the desired eigenvalue/eigenvector pair: Claim \ref{claim:factor} implies that any root of $r_{k-2}$ must also be a root of $q_{2^{k-1}-1}$ and $q_{2^k-1}$. Comparing the recursions for $\lambda_i$ and $\hat \lambda_{i,j}$ shows that $\lambda_i=\hat\lambda_{i,0}$ for $i\ge 0$. In particular, $\lambda_{k-2}$ is a root of $r_{k-2}$. Then $q_{2^k-1}(\lambda_{k-2})=0$ implies that $\lambda_{k-2}$ is an eigenvalue of $M_{2^{k}-1}$. Using that $q_{2^{k-1}-1}(\lambda_{k-2})=0$, the form of the eigenvectors in (\ref{eq:eigenvectorForm}), and Claim \ref{claim:opposites} shows that an eigenvector of $M_{2^k-1}$ corresponding to $\lambda_{k-2}$ is given by
\begin{equation*}
  v_k=\begin{bmatrix}
    q_0(\lambda_{k-2}) \\
    \vdots \\
    q_{2^{k-1}-2}(\lambda_{k-2}) \\
    0 \\
    -q_{2^{k-1}-2}(\lambda_{k-2}) \\
    \vdots \\
    -q_0(\lambda_{k-2})
  \end{bmatrix}.
\end{equation*}
In particular, we have $c_{2^k-1}^\top v_k=0$ and $M_{2^k-1}v_k=\lambda_{k-2}v_k$. So, $\frac{\lambda_{k-2}}{2}$ must be an eigenvalue of $P_a$. 

Now we prove Claim~\ref{claim:factor}. At $k=1$, the formula holds since:
$$
q_1(s)=r_0(s)=s.
$$
To prove the claim, assume inductively that the formula from the claim holds at some $k\ge 1$. It  then suffices to show that
\begin{equation}
  \label{eq:qSplit}
  q_{2^{k+1}-1}(s)=q_{2^{k}-1}(s)r_{k}(s).
\end{equation}

To prove (\ref{eq:qSplit}), we will need to prove a few intermediate formulas. The first is that for all $0< i<n$:
\begin{align}
  \label{eq:qLowHigh}
q_n(s)=q_i(s)q_{n-i}(s)-q_{i-1}(s)q_{n-i-1}(s)
\end{align}
The case of $i=1<n$ follows from (\ref{eq:charPolyRecursion}). Now, assume that (\ref{eq:qLowHigh}) holds for some $0<i<n-1$. Using the recursive definition from (\ref{eq:charPolyRecursion}) twice gives:
\begin{align*}
  q_n(s)&=q_i(s)\left(sq_{n-i-1}(s)-q_{n-i-2}(s)\right)-q_{i-1}(s)q_{n-i-1}(s)\\
        &=\left(sq_i(s)-q_{i-1}(s)\right)q_{n-i-1}(s)-q_i(s)q_{n-i-2}(s) \\
        &= q_{i+1}(s)q_{n-i-1}(s)-q_i(s)q_{n-i-2}(s).
\end{align*}
Thus, (\ref{eq:qLowHigh}) holds for all $1<i<n$ by induction. 

If for $k\ge 1$, $n=2^{k+1}-1$, and $i=2^{k}-1$, we have $n-i=2^{k}$, $i-1=2^{k}-2$, and $n-i-1=2^{k}-1$, so that (\ref{eq:qLowHigh}) can be written as:
$$
q_{2^{k+1}-1}(s)=q_{2^{k}-1}(s)\left(q_{2^{k}}(s)-q_{2^{k}-2}(s) \right).
$$

Thus, to prove (\ref{eq:qSplit}), it suffices to show that
\begin{equation}
  \label{eq:tailIdentity}
q_{2^k}(s)-q_{2^{k}-2}(s)=r_k(s) \textrm{ for } k\ge 1.
\end{equation}

The proof of (\ref{eq:tailIdentity}) will rely on the intermediate identity:
\begin{equation}
  \label{eq:squareIdentity}
  q_i(s)^2=q_{i+1}(s) q_{i-1}(s)+1 \textrm{ for } i\ge 1.
\end{equation}
At $i=1$:
\begin{align*}
  q_{2}(s)q_0(s)+1&=(s^2-1)\cdot (1)+1 \\
                  &=s^2 \\
                  &=q_1(s)^2. 
\end{align*}
Assume inductively that (\ref{eq:squareIdentity}) holds for some $i\ge 1$. Using (\ref{eq:charPolyRecursion}) twice, followed by the induction assumption gives:
\begin{align*}
  q_{i+2}(s)q_{i}(s)+1&=\left(s q_{i+1}(s)-q_{i}(s)\right)q_i(s)+1 \\
                      &=q_{i+1}(s)\left( sq_i(s)\right) - q_i(s)^2+1 \\
                      &=q_{i+1}(s)\left(q_{i+1}(s)+q_{i-1}(s)\right)-q_i(s)^2+1\\
                      &=q_{i+1}(s)^2+\left(q_{i+1}(s)q_{i-1}(s)+1\right) - q_i(s)^2 \\
                      &=q_{i+1}(s)^2.
\end{align*}
Thus, (\ref{eq:squareIdentity}) holds by induction. 

Now we prove (\ref{eq:tailIdentity}). 
At $k=1$, we have
\begin{align*}
  q_2(s)-q_0(s)&=(s^2 -1) - 1 \\
               &=r_1(s).
\end{align*}

Now assume inductively that (\ref{eq:tailIdentity}) holds at some $k\ge 1$. Applying (\ref{eq:qLowHigh}) at $n=2^{k+1}$ and $i=2^k$, and then at $n=2^{k+1}-2$ and $i=2^k-1$ gives
\begin{align*}
  q_{2^{k+1}}(s)&=q_{2^k}(s)^2-q_{2^k-1}(s)^2 \\
  q_{2^{k+1}-2}(s)&=q_{2^{k}-1}(s)^2-q_{2^{k}-2}(s)^2
\end{align*}
so that
$$
q_{2^{k+1}}(s)-q_{2^{k+1}-2}(s)=q_{2^k}(s)^2-2q_{2^k-1}(s)^2 +q_{2^{k}-2}(s)^2.
$$
Now using (\ref{eq:squareIdentity}) and the induction hypothesis gives
\begin{align*}
  q_{2^{k+1}}(s)-q_{2^{k+1}-2}(s)&= q_{2^k}(s)^2-2\left(q_{2^k}(s)q_{2^k-2}(s)+1 \right) +q_{2^{k}-2}(s)^2 \\
                                 &=\left(q_{2^k}(s)-q_{2^{k}-2}(s) \right)^2 - 2 \\
                                 &=r_k(s)^2 - 2 \\
                                 &=r_{k+1}(s). 
\end{align*}
Thus, (\ref{eq:tailIdentity}) has been proved by induction and so Claim \ref{claim:factor} holds.

Now we prove Claim~\ref{claim:roots}. The proof relies on the following identity for $r_i$:
\begin{equation}
  \label{eq:rComposition}
  r_n(s)=r_i(r_{n-i}(s)) \textrm{ for } 0\le i \le n.
\end{equation}
Note that for all $n\ge 0$, $r_n(s)=r_0(r_n(s))=r_n(r_0(s))$ since $r_0(s)=s$, while for all $n\ge 1$, $r_n(s)=r_1(r_{n-1}(s))$ since $r_1(s)=s^2-2$. In particular, (\ref{eq:rComposition}) holds at $n=0$ and $n=1$.

Now, inductively assume that for some $n\ge 1$,  $r_{i}(s)=r_j(r_{i-j}(s))$ for all $0\le j\le i\le n$. As discussed above, 
$$
r_{n+1}(s)=r_{n+1}(r_0(s))=r_{0}(r_{n+1}(s))=r_1(r_n(s)).
$$
Now fix $1\le i\le n$. Then 
\begin{align*}
  r_{n+1}(s)=r_1(r_n(s))=r_1(r_{i-1}(r_{n-(i-1)}(s)))=r_{i}(r_{n+1-i}(s))=r_{i}(r_{n+1-i}(s)).
\end{align*}
Thus, we have shown that \eqref{eq:rComposition} holds for all $n\ge 0$.

In particular, (\ref{eq:rComposition}) implies that the recursion for $r_i$ can be expressed alternatively as:
$$
r_{i+1}(s)=r_i(r_1(s))=r_i(s^2-2).  
$$

Now, $r_0(s)=s$, so its only root is $\hat \lambda_{0,0}=0$. Inductively assume that the roots of $r_i$ are given by $\hat\lambda_{i,0},\ldots\hat\lambda_{i,2^i-1}$. Since $r_i$ is a monic polynomial we can express $r_{i+1}$ as:
\begin{align*}
  r_{i+1}(s)&=r_i(s^2-2) \\
            &=\prod_{j=0}^{2^{i}-1}(s^2-2-\hat\lambda_{i,j})\\
            &=\prod_{j=0}^{2^{i}-1}\left(\left(s-\sqrt{2+\hat\lambda_{i,j}}\right)\left(s+\sqrt{2+\hat\lambda_{i,j}}\right) \right)
\end{align*}
Thus, the recursive formula for the roots  of $r_i$ holds and Claim~\ref{claim:roots} is proved. 

Finally, we prove Claim~\ref{claim:opposites}. If $q_n(\lambda)=0$ and $i=0$, then $q_{n+i}(\lambda)=-q_{n-i}(\lambda)=0$. If $1\le n$, then using (\ref{eq:charPolyRecursion}) and that $q_n(\lambda)=0$ gives:
\begin{align*}
  q_{n+1}(\lambda)&=\lambda q_n(\lambda)-q_{n-1}(\lambda) \\
                  &=-q_{n-1}(\lambda). 
\end{align*}
Fix $0\le i <n$ and assume inductively that $q_{n+j}(\lambda)=-q_{n-j}(\lambda)$ for all $j$ with $0\le j\le i$. Then, using (\ref{eq:charPolyRecursion}), the induction hypothesis, and then (\ref{eq:charPolyRecursion}) again gives:
\begin{align*}
  q_{n+i+1}(\lambda)&=\lambda q_{n+i}(\lambda)-q_{n+i-1}(\lambda) \\
                    &=-\left(\lambda q_{n-i}(\lambda)-q_{n-i+1}(\lambda)\right) \\
                    &=-\left((q_{n-i-1}(\lambda)+q_{n-i+1}(\lambda))-q_{n-i+1}(\lambda)\right) \\
                    &=-q_{n-i-1}(\lambda). 
\end{align*}
Thus, Claim~\ref{claim:opposites} has been proved by induction, and the proof of the proposition is complete. 
\end{proof}

\section{Conclusion}
\label{sec:conclusion}
This paper gives methods to compute bounds on (Dirichlet) spectral gaps for non-reversible Markov chains, motivated by analysis of temporal difference algorithms. The current results give general approaches to computing spectral gap bounds and specific bounds for the case of linear Gaussian systems.  
Further bounds for spectral gaps of Markov chains are certainly possible. For example, \cite{chung2005laplacians} gives bounds on the spectral gap for finite-state Markov chains via a variation on Cheeger's inequality. Future work will examine the interplay between stability for nonlinear stochastic systems and spectral gap bounds, as well as continuous-time variations.

\begin{funding}
  The author was supported in part by:  NSF ECCS 2412435 and NIH \\
1R01NS147767-0.
\end{funding}



\bibliographystyle{imsart-number} 
\bibliography{CoOL-bib/cool-refs}       

\end{document}